\newtheorem{theorem}{Theorem} 
\newtheorem{lem}{Lemma} 
\newtheorem{prop}{Proposition} 
\newtheorem{conj}{Conjecture} 
\newtheorem{fact}{Fact}
\newtheorem{claim}{\bf Claim}
\newtheorem{subclaim}{Subclaim}
\newtheorem{sub-subclaim}{Subclaim}[subclaim]
\newtheoremstyle{def}{}{}{}{}{}{.}{ }{}
\theoremstyle{def}
\newtheoremstyle{case}{}{}{}{}{}{.}{ }{}
\theoremstyle{case}
\newcommand{\fon}{\fontsize{9pt}{\baselineskip}\selectfont}
\begin{document}

\begin{center}
{\Large On Degree Sum Conditions and Vertex-Disjoint Chorded Cycles }
\end{center}

\begin{center}
\renewcommand{\thefootnote}{\fnsymbol{footnote}}
Bradley Elliott\,$^{a}$ \hspace{0.3cm}
Ronald J. Gould\,$^{a}$ \hspace{0.3cm} 
Kazuhide Hirohata\,$^{b}$
\footnote[0]{{\it E-mail addresses:} bradley.elliott@emory.edu\,(B.\,Elliott),
rg@emory.edu\,(R.J.\,Gould), hirohata@ece.ibaraki-ct.ac.jp (K.\,Hirohata). }

$^{a}$\,{\fon Department of Mathematics, Emory University, 
Atlanta, GA 30322 USA.}\\
$^{b}$\,{\fon Department of Industrial Engineering, Computer Science, 
National Institute of Technology, \\
Ibaraki College, Hitachinaka, 312-8508 Japan.}
\end{center}

\begin{abstract}
In this paper, we consider a general degree sum condition sufficient to 
imply the existence of $k$ vertex-disjoint chorded cycles in a graph $G$. 
Let $\sigma_t(G)$ be the minimum degree sum of $t$ independent vertices of $G$. 
We prove that if $G$ is a graph of sufficiently large order and 
$\sigma_t(G)\geq 3kt-t+1$ with $k\geq 1$, then $G$ contains $k$ 
vertex-disjoint chorded cycles. 
We also show that the degree sum condition on $\sigma_t(G)$ is sharp.
To do this, we also investigate graphs without chorded cycles.

\vspace{0.2cm}

\noindent
Keywords: Vertex-disjoint chorded cycles, Minimum degree sum, Degree sequence, 
Biconnected components.
\end{abstract}

%%% Chapter 1 (Introduction) %%%
\section{Introduction}

\hspace*{0.5cm} 
The study of cycles in graphs is a rich and an important area.  
One question of particular interest is to find conditions that guarantee 
the existence of $k$ vertex-disjoint cycles. 
Let $G$ be a graph. 
Corr\'adi and Hajnal~\cite{CH} first considered a minimum degree condition 
to imply a graph must contain $k$ vertex-disjoint cycles, proving that 
if $|G|\geq 3k$ and the minimum degree $\delta(G) \geq 2k$, 
then $G$ contains $k$ vertex-disjoint cycles.  For an integer 
$t\geq 1$, let $$\sigma_t (G)=\mbox{min}\left\{\sum_{v\in X}\deg_G(v)\, :\,   
X\ {\rm is\ an\ independent\ vertex\ set\ of}\ G\ {\rm with}\ |X|=t.\right\},$$  
and $\sigma_t (G)=\infty$ when the independence number is $t-1$ or less. 
Enomoto~\cite{E} and Wang~\cite{W} independently extended the 
Corr\'adi and Hajnal result, requiring a weaker condition on the 
minimum degree sum of any two non-adjacent vertices. 
They proved that if $|G|\geq 3k$ and $\sigma_2(G)\geq 4k-1$, then $G$ 
contains $k$ vertex-disjoint cycles.  In 2006, Fujita et al.~\cite{FMTY} 
proved that if $|G| \geq 3k+2$ and $\sigma_3 (G) \geq 6k-2$, 
then $G$ contains $k$ vertex-disjoint cycles, and in~\cite{GHK}, 
this result was extended to $\sigma_4(G)\geq 8k-3$.   
Recently, Ma and Yan~\cite{MaYan} proved a conjecture from~\cite {GHK} by showing that
if $G$ has sufficiently large order and $\sigma_t (G) \geq 2kt-t+1$,
then $G$ contains $k$ vertex-disjoint cycles.
	
A {\it chord} of a cycle is an edge between two non-consecutive vertices of 
the cycle.  An extension of the study of vertex-disjoint cycles is that 
of vertex-disjoint chorded cycles. 
We say a cycle is \emph{chorded} if it contains at least one chord. 
In 2008, Finkel~\cite{F} proved the following result on the existence 
of $k$ vertex-disjoint chorded cycles which can be viewed as an extension
of the Corr\'adi and Hajnal result. 

%% Thm 1 %%
\begin{theorem}
[\rm Finkel \cite{F}] 
Let $k\geq 1$ be an integer. 
If $G$ is a graph of order at least $4k$ and $\delta(G) \geq 3k$,
then $G$ contains $k$ vertex-disjoint chorded cycles. 
\label{Finkel} 
\end{theorem}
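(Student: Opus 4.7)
My plan is to prove the theorem by induction on $k$. The base case $k=1$ requires showing that $\delta(G)\geq 3$ forces a chorded cycle: take a longest path $v_1v_2\cdots v_\ell$ in $G$; since the path is longest, all neighbors of $v_1$ lie on it, so $\deg(v_1)\geq 3$ gives three such neighbors $v_2,v_i,v_j$ with $2<i<j$, and then $v_1v_2\cdots v_jv_1$ is a cycle of length $j\geq 4$ with chord $v_1v_i$.

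For the inductive step, assume the theorem for $k-1$ and let $G$ satisfy $|V(G)|\geq 4k$ and $\delta(G)\geq 3k$. The natural approach is to locate a chorded cycle $C$ in $G$ of small order, delete $V(C)$, and apply the induction to $G' := G-V(C)$. The order condition $|V(G')|\geq 4(k-1)$ is immediate whenever $|V(C)|\leq 4$, but the degree condition $\delta(G')\geq 3(k-1)$ is delicate: deleting a 4-vertex set can drop the minimum degree by as much as $4$, while we have only $3$ to spare. Since a chorded cycle has order at least $4$, the target is a chorded $4$-cycle $C$ such that no vertex of $G-V(C)$ is adjacent to all of $V(C)$.

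The central technical obstacle is thus the \textbf{heavy} vertices $v\notin V(C)$ adjacent to every vertex of $V(C)$. I would address this with a swap/rotation argument: pick $C$ to be a shortest chorded cycle, and if some $v\notin V(C)$ is adjacent to all of $V(C)$, use that $G[V(C)\cup\{v\}]$ is so dense that $C$ can be replaced by a chorded $4$-cycle on $(V(C)\cup\{v\})\setminus\{w\}$ for a suitable $w\in V(C)$, either eliminating the heavy vertex or transferring the problem to $w$; iterating against a potential function such as $\sum_{u\in V(C)}\deg_{G-V(C)}(u)$ forces termination at a cycle $C^{\ast}$ whose deletion preserves the inductive hypothesis.

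I expect the hardest case to be extremal graphs containing no chorded $4$-cycle at all --- bipartite-like configurations whose shortest chorded cycle has length $\geq 6$. For these I would abandon the one-cycle-at-a-time strategy in favor of a global extremal argument: take a maximum collection $\mathcal{C}=\{C_1,\dots,C_s\}$ of vertex-disjoint chorded cycles with $s\leq k-1$ and $\sum|V(C_i)|$ minimum, observe that the remainder $R=G-\bigcup V(C_i)$ contains no chorded cycle (so every block of $R$ is a single edge or a cycle), and use the high minimum degree of $G$ together with this structural restriction to produce either an additional disjoint chorded cycle or a strictly smaller-weight collection, contradicting the choice of $\mathcal{C}$.
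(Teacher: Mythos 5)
The paper does not actually prove this statement; it is quoted from Finkel's paper, so the only thing to compare against is the extremal machinery the paper builds for its own main theorem (Lemmas~\ref{max two chords}--\ref{lem: paths 5 edges} and Section~4), which is exactly the style of argument you defer to in your final paragraph. Your base case is correct. The problem is that the two steps carrying all the weight of the inductive case are not proved, only announced.

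First, the swap/rotation step is a genuine gap. If $v\notin V(C)$ is adjacent to all of $V(C)$ and you replace $C$ by a chorded $4$-cycle on $(V(C)\cup\{v\})\setminus\{w\}$, you have not shown that the potential $\sum_{u\in V(C)}\deg_{G-V(C)}(u)$ strictly decreases, nor that the set of heavy vertices shrinks; the heavy vertex can simply become $w$, or other heavy vertices of the old cycle can remain heavy for the new one, so the iteration need not terminate at a cycle with \emph{no} heavy vertex --- and even a single surviving heavy vertex destroys the degree condition $\delta(G')\geq 3(k-1)$. Second, and more seriously, the fallback for graphs with no chorded $4$-cycle is not an argument but a restatement of the theorem: ``use the high minimum degree together with this structural restriction to produce either an additional disjoint chorded cycle or a smaller-weight collection'' is precisely where all of the difficulty of Finkel's proof lives. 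Making it work requires quantitative control on edges from the remainder to a minimal cycle system (the content of Lemma~\ref{lemma:C is K4 or 6-cycle}), structural information about chordless graphs (the content of Section~2), and a case analysis on the resulting degree sequences, as in Section~4 of this paper. Since this case is unavoidable --- bipartite graphs, for instance, have no chorded $4$-cycle --- the proposal as written is a plausible research plan but not a proof.
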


In 2010, Chiba et al. \cite{CFGL} extended the above result by using 
the $\sigma_2(G)$ condition.   

%% Thm 2 %%
\begin{theorem}
[\rm Chiba, Fujita, Gao, Li \cite{CFGL}] 
Let $k\geq 1$ be an integer. 
If $G$ is a graph of order at least {\rm 4}k and $\sigma_2(G)\geq 6k-1$, 
then $G$ contains $k$ vertex-disjoint chorded cycles.
\label{cfgl}
\end{theorem}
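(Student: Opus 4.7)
I would argue by contradiction via an extremal configuration. Suppose $G$ satisfies the hypotheses but contains fewer than $k$ vertex-disjoint chorded cycles. Select a family $\mathcal{C} = \{C_1,\dots,C_s\}$ of vertex-disjoint chorded cycles with $s \leq k-1$ maximum and, subject to that, with $\sum_{i=1}^{s}|V(C_i)|$ minimum; set $H = G - V(\mathcal{C})$. A short preliminary lemma is needed to show that each $C_i$ can be taken of bounded length (indeed, the degree-sum hypothesis forces each to be a shortest chorded cycle of constant size, ideally length $4$, giving $K_4$ minus an edge). Combined with $|V(G)| \geq 4k$ this keeps $|V(H)|$ at least $4$, so $H$ has at least two non-adjacent vertices to work with.

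By the maximality of $s$, the subgraph $H$ contains no chorded cycle. A structural lemma on chorded-cycle-free graphs — every block is an edge or an induced cycle, so $H$ is a ``tree of induced cycles'' — yields a sharp bound of the form $\sigma_2(H) \leq |V(H)| + O(1)$, which is tiny compared to $6k-1$. Picking two non-adjacent $u,v \in V(H)$ realizing this small $H$-degree sum, the hypothesis $\deg_G(u)+\deg_G(v) \geq 6k-1$ forces $\{u,v\}$ to send many edges into $V(\mathcal{C})$. By averaging, some $C_j$ receives at least a fixed constant number of edges from $\{u,v\}$, more than the short cycle $C_j$ can accommodate without structural collapse.

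The heart of the proof is then a reconfiguration step: case-analyzing the positions of the neighbors of $u$ and $v$ on $C_j$ together with $C_j$'s chord(s), one shows that either (i) a new chorded cycle can be built from $u, v$ and a proper subset of $V(C_j)$, leaving a chorded cycle on the remaining vertices of $C_j$, contradicting the maximality of $s$; or (ii) $C_j$ can be replaced by a chorded cycle on a proper subset of $V(C_j) \cup \{u,v\}$ of strictly smaller length, contradicting the minimality of $\sum |V(C_i)|$. The main obstacle is precisely this case analysis: one must enumerate each possible short cycle length, each chord configuration, and each arrangement of the neighbors of $u, v$ around $C_j$, handling each separately. This is also where the bound $6k-1$ becomes tight, since at $6k-2$ one can construct extremal graphs (via the kind of sharpness examples discussed later in the present paper) in which no such reconfiguration succeeds.
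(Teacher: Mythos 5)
The paper does not actually prove Theorem~\ref{cfgl}; it is quoted from Chiba--Fujita--Gao--Li \cite{CFGL}, so the only fair comparison is with the machinery the paper builds for its own Theorem~\ref{thm:main}, which follows exactly the contradiction-plus-extremal-family template you propose. At the level of skeleton your plan is the right one, but two of your key lemmas are false as stated and a third step does not deliver what you need from it.

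First, minimality of $\sum_i |V(C_i)|$ does \emph{not} force the cycles to have bounded length: a long cycle with a single chord contains no shorter chorded cycle, so a minimal chorded cycle can be arbitrarily long. The true statements are Lemma~\ref{max two chords} and Lemma~\ref{lemma:C is K4 or 6-cycle} (a minimal long cycle has at most two chords, and an outside vertex sends at most $4$ edges to a minimal cycle, with $4$ forcing $|C|=4$ and $3$ forcing $|C|\le 6$). Consequently $|V(H)|\ge 4$ does not follow from $n\ge 4k$, and the case where the cycles are long and $H$ is too small to contain two non-adjacent vertices must be handled separately --- this is precisely the role of the first claim in the paper's Section 4, which shows long cycles cannot absorb the required degree without splitting into two disjoint chorded cycles. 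Second, it is not true that every block of a chorded-cycle-free graph is an edge or an induced cycle: $K_{2,3}$, and more generally any theta graph, is a non-chorded biconnected graph that is not a cycle; Propositions~\ref{triangle}--\ref{n/3stems} exist because the real structure is an ear decomposition, not a ``tree of induced cycles.'' Third, the bound you extract, $\sigma_2(H)\le |V(H)|+O(1)$, is not ``tiny compared to $6k-1$'': $|V(H)|$ can be of order $n$, which is unbounded in terms of $k$. What is needed (and what Lemma~\ref{indep} supplies once $|H|$ is large) is a \emph{constant} bound --- two non-adjacent vertices each of degree at most $2$ in $H$, so that $\deg_{\mathscr{C}}(u)+\deg_{\mathscr{C}}(v)\ge 6(k-1)+1$, and averaging hands you a cycle receiving at least $7$ edges, which by Lemma~\ref{lemma:C is K4 or 6-cycle} must be a $4$-cycle on which the swap argument can run. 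With these three repairs your outline becomes, in essence, the $t=2$ case of the proof of Theorem~\ref{thm:main}.
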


Recently, Theorem \ref{cfgl} was extended as follows. 

%% Thm 3 %%
\begin{theorem}
[\rm \cite{GHK2}] 
Let $k\geq 1$ be an integer. 
If $G$ is a graph of order at least $8k+5$ and $\sigma_3(G) \geq 9k-2$,
then $G$ contains $k$ vertex-disjoint chorded cycles. 
\label{sigma3} 
\end{theorem}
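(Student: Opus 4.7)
My plan is to prove Theorem~\ref{sigma3} by induction on $k$, augmented by a preliminary investigation of chorded-cycle-free graphs, as flagged in the abstract. The starting observation is that a graph has no chorded cycle if and only if every one of its blocks is either a single edge or an induced cycle — that is, the graph is a \emph{cactus}. From this I would extract the following auxiliary lemma, which drives the induction: if $H$ is a cactus of sufficiently large order (and in particular with $\alpha(H)\geq 3$), then $H$ contains three pairwise independent vertices whose $H$-degree sum is at most $6$. The intuition is that cacti are edge-sparse and rich in vertices of degree at most $2$ (non-cut vertices on cycle blocks) or of degree $1$ (leaves), so for large enough cacti one may always spread an independent triple across distinct blocks.

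For the base case $k=1$, assuming $\sigma_3(G)\geq 7$ and $|V(G)|\geq 13$, if $G$ has no chorded cycle then $G$ is a cactus, and the auxiliary lemma yields three independent vertices of $G$-degree sum at most $6$, contradicting $\sigma_3(G)\geq 7$. For the inductive step, assume the theorem for $k-1$. Since $\sigma_3(G)\geq 9k-2 > 9(k-1)-2$, the inductive hypothesis provides $k-1$ vertex-disjoint chorded cycles in $G$; among all such families I would choose $\mathcal{C}=\{C_1,\ldots,C_{k-1}\}$ minimizing $\sum_j|V(C_j)|$, and set $H=G-\bigcup_jV(C_j)$. If $H$ contains a chorded cycle, appending it to $\mathcal{C}$ finishes the proof, so we may assume $H$ is a cactus. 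The auxiliary lemma then yields independent vertices $v_1,v_2,v_3\in V(H)$ with $\sum_i\deg_H(v_i)\leq 6$, and the $\sigma_3$ hypothesis forces
\[
\sum_{j=1}^{k-1}\sum_{i=1}^{3}\bigl|N_G(v_i)\cap V(C_j)\bigr|\;\geq\;(9k-2)-6\;=\;9k-8,
\]
so since $(9k-8)/(k-1)>9$, some cycle $C_j$ receives at least $10$ edges from $\{v_1,v_2,v_3\}$.

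The heart of the argument is then the local claim: if $C$ is a chorded cycle of minimum length and $v_1,v_2,v_3$ are pairwise independent vertices outside $V(C)$ with $\sum_i|N_G(v_i)\cap V(C)|\geq 10$, then $V(C)\cup\{v_1,v_2,v_3\}$ contains two vertex-disjoint chorded cycles; replacing $C_j$ in $\mathcal{C}$ by these two cycles yields $k$ vertex-disjoint chorded cycles and completes the proof. The genuine difficulty, and the main obstacle I anticipate, is the short-cycle case $|V(C)|=4$: then $V(C)\cup\{v_1,v_2,v_3\}$ has only seven vertices, which cannot accommodate two vertex-disjoint chorded cycles (each requiring at least four vertices). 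To dispose of this case I would prove a separate claim that when $|V(C)|=4$ the ten incident edges are dense enough to force a chorded cycle inside $H\cup\{x\}$ for some $x\in V(C_j)$, which again produces $k$ vertex-disjoint chorded cycles by swapping. For $|V(C)|\geq 5$ a careful case analysis on the cyclic positions of the neighbors of $v_1,v_2,v_3$ along $C$, combined with the existing chord of $C$, should split $C$ into two arcs onto which the $v_i$'s can be grafted to produce the two required disjoint chorded cycles.
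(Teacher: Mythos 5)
Your opening characterization is where the argument breaks: it is not true that a graph with no chorded cycle must be a cactus. The graph $K_{2,3}$ (or any theta graph whose three internally disjoint paths each have length at least two) is biconnected, is not a cycle, and contains no chorded cycle, since every cycle in it is induced. This is exactly why the present paper develops a nontrivial ear-type decomposition of non-chorded biconnected graphs in Section 2 (Propositions \ref{decomp}--\ref{n/3stems}) instead of appealing to cactus structure. The conclusion of your auxiliary lemma can be rescued via Lemma \ref{indep}, but only with the constant $n/12$, which already costs you the stated order bound $8k+5$ (for $k=1$ and $n=13$ you get one low-degree vertex, not three). A second, independent gap: in the inductive step you apply the auxiliary lemma to $H=G-\bigcup_j V(C_j)$, but nothing guarantees $H$ is large, or even that $\alpha(H)\geq 3$, since the $k-1$ cycles could swallow almost all of $G$. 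The paper has to work to exclude this (Claim \ref{claim:12t+13} and Subclaim \ref{10t-1}, which bound the cycle lengths using minimality together with Lemmas \ref{max two chords}--\ref{lem: paths 5 edges}); your proposal is silent on it.

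Most seriously, your ``local claim'' is false precisely in the only case that can occur. By Lemma \ref{lemma:C is K4 or 6-cycle}, for a minimal family each $v_i$ sends at most $4$ edges to $C$, at most $3$ unless $|C|=4$, and at most $2$ unless $|C|\leq 6$; hence a degree sum of at least $10$ from three independent outside vertices forces $|C|=4$, with degree sequence $(4,4,2)$, $(4,3,3)$ or better. Then $|V(C)\cup\{v_1,v_2,v_3\}|=7$, and two vertex-disjoint chorded cycles need at least eight vertices, so the claimed conclusion cannot hold. Your fallback --- finding a chorded cycle in $H\cup\{x\}$ for some $x\in V(C_j)$ and ``swapping'' --- does not preserve the count: once $x$ is removed, $C_j-x$ is no longer a chorded cycle, leaving $k-1$ cycles rather than $k$. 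Handling the sequences $(4,4,2)$ and $(4,3,3)$ is the real content of the theorem (which this paper only cites; its own proof of the more general Theorem \ref{thm:main} shows what is required): one needs further extremal choices beyond minimizing $|\mathscr{C}|$ --- compare (A2) and (A3), minimizing $comp(H)$ and maximizing the number of $K_4$'s --- together with paths inside components of $H$ joining the $v_i$, as in the case analysis following (\ref{deg4333}). As written, the proof cannot be completed along the proposed lines.
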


The last result was further extended to $\sigma_4 (G)$ in \cite{sigma4}.

%% Thm 4 %%
\begin{theorem} [\rm \cite{sigma4}]
Let $k\geq 1$ be an integer. 
If $G$ is a graph of order $n \geq 11k+7$ and $\sigma_4 (G) \geq 12k - 3$,
then $G$ contains $k$ vertex-disjoint chorded cycles. 
\label{k ge 2} 
\end{theorem}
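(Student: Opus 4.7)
The plan is to proceed by induction on $k$. For the base case $k=1$, I must show that a graph with $n\geq 18$ vertices and $\sigma_4(G)\geq 9$ contains at least one chorded cycle; I would establish this via the structural study of chorded-cycle-free graphs alluded to in the abstract, namely the observation that in such a graph every block is either $K_2$ or a chordless cycle, which forces $\sigma_4(G)$ to be small whenever $n$ is large.

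For the inductive step, assume the result for $k-1$ and let $G$ satisfy the hypotheses for $k$. Fix a vertex-disjoint collection $\mathcal{F}=\{C_1,\ldots,C_s\}$ of chorded cycles of $G$ chosen so that (i) $s$ is maximum, and (ii) subject to (i), $\sum_{i=1}^{s}|V(C_i)|$ is minimum. Set $H=G-V(\mathcal{F})$. If $s\geq k$ we are done, so assume $s\leq k-1$. The maximality of $s$ forces $H$ to be chorded-cycle-free, so its block structure is controlled by the base-case analysis; in particular $|V(H)|\geq n-4(k-1)$ is large, and $H$ has independence number at least $4$, guaranteeing four pairwise non-adjacent vertices $v_1,v_2,v_3,v_4\in V(H)$.

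Applying $\sigma_4(G)\geq 12k-3$ to $\{v_1,v_2,v_3,v_4\}$, together with the block-structure upper bound on $\sum_i \deg_H(v_i)$, forces $\sum_{i=1}^{4}|N_G(v_i)\cap V(\mathcal{F})|$ to be large. Since $s\leq k-1$, by pigeonhole some cycle $C_j$ receives many attachments from $\{v_1,\ldots,v_4\}$. The decisive step is a cycle-exchange argument: using these attachments I build either a new chorded cycle on a proper subset of $V(C_j)\cup\{v_1,\ldots,v_4\}$ that is strictly shorter than $C_j$ (contradicting (ii)), or a partition of $V(C_j)\cup\{v_1,\ldots,v_4\}$ into two disjoint chorded cycles (contradicting (i)).

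The main obstacle is precisely this cycle-exchange analysis. Short chorded cycles in $\mathcal{F}$, in particular those of length $4$, leave very little room to substitute, so one must exhaustively enumerate how the attachments of $v_1,\ldots,v_4$ to $V(C_j)$ can be distributed and in every configuration explicitly produce the required shorter or additional chorded cycle. A secondary obstacle is calibrating the base-case structural theorem precisely enough to simultaneously supply the needed independent $4$-set in $H$ under $n\geq 11k+7$ and to yield the sharp internal degree bounds used in the pigeonhole step; it is this calibration that ultimately dictates the numerical constants $11k+7$ and $12k-3$ in the statement.
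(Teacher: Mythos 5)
The paper does not actually prove this theorem---it is quoted from the separate submission \cite{sigma4}---but its proof of the general Theorem~\ref{thm:main} follows the same broad strategy you describe, so the comparison is still meaningful. Your outline has two genuine gaps beyond the admitted one. First, the structural claim underlying your base case, that in a chorded-cycle-free graph every block is $K_2$ or a chordless cycle, is false: $K_{2,3}$, and more generally any theta graph whose three internally disjoint paths all have length at least two, is biconnected, contains no chorded cycle, and is not a cycle. The correct statement is much weaker: Propositions~\ref{decomp}--\ref{n/3stems} show that a non-chorded biconnected graph is a cycle plus a sequence of ears each with at least two edges, and all one can extract is that a constant fraction of the vertices have degree at most~$2$ (the paper gets an independent set of size $n/12$ of such vertices in Lemma~\ref{indep}). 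Your base case and your ``block-structure upper bound on $\sum_i\deg_H(v_i)$'' both need to be re-derived from this weaker fact, and the constants $11k+7$ and $12k-3$ are tight enough that this recalibration is not automatic.

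Second, the inequality $|V(H)|\ge n-4(k-1)$ is unjustified: minimizing $\sum_i|V(C_i)|$ subject to maximality does not force each $C_i$ to be a $4$-cycle, since a long induced cycle with a single chord can be minimal when no shorter chorded cycle is available nearby. Bounding the lengths of the cycles in a minimal system is itself a substantial step (compare Subclaim~1 in the proof of Theorem~\ref{thm:main}, which labels the vertices of a long cycle into independent classes, uses Lemma~\ref{max two chords} to control the chords, and invokes Lemma~\ref{lem: paths 5 edges} to reach a contradiction). Finally, you correctly identify the cycle-exchange case analysis as decisive but leave it entirely unexecuted; in the paper's argument this is the bulk of the proof (forcing the target cycle to be a $K_4$ via Lemma~\ref{lemma:C is K4 or 6-cycle}, classifying the degree sequences $(4,4,3,\ldots)$, $(4,3,\ldots,3)$, and so on, and trading components of $H$ against the optimality conditions (A1)--(A3)). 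As written, the proposal is a plausible plan whose load-bearing steps are either missing or rest on a false structural lemma.
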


In this paper, we prove the following result in Section 4.

%% Thm 5 (Main Thm) %%
\begin{theorem}
For $k\geq 1$ and $t\geq1$,
if $G$ is a graph of order $n \geq (10t -1)(k-1) +12t+13$ and $\sigma_t(G) \geq 3kt - t +1$,
then $G$ contains $k$ vertex-disjoint chorded cycles.  Further,
this degree condition is sharp.
\label{thm:main}
\end{theorem}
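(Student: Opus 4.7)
My approach is induction on $k$, with a self-contained base case treating graphs that have no chorded cycle at all. The case $k=1$ asks: if $G$ is sufficiently large and $\sigma_t(G)\geq 2t+1$, must $G$ contain a chorded cycle? Equivalently, I need to show that a sufficiently large graph with no chorded cycle has an independent $t$-set of degree sum at most $2t$. The point is that a graph contains no chorded cycle iff every block is either a bridge or an induced cycle (any $2$-connected graph that is not itself a cycle contains a chorded cycle, visible from any non-trivial step in an ear decomposition). In such a ``cactus'' graph of order at least $12t+13$, a counting argument on the block-tree yields many leaf blocks, from which one can extract $t$ pairwise non-adjacent vertices of degree $1$ or $2$; their degree sum is at most $2t$.

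The sharpness of the bound is exhibited by $K_{3k-1,\,n-3k+1}$. Every vertex in the larger part has degree $3k-1$ and the larger part is independent, so $\sigma_t = t(3k-1) = 3kt-t$. Because the graph is bipartite, no cycle has a chord (a chord would be an edge within one part), so it contains no chorded cycle at all, let alone $k$ disjoint ones.

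For the inductive step, assume the theorem for $k-1$ and let $G$ satisfy the hypotheses for $k\geq 2$. Since $\sigma_t(G)\geq 3(k-1)t-t+1$ and the order is certainly large enough, induction gives $k-1$ vertex-disjoint chorded cycles; I would choose $\mathcal{C}=\{C_1,\ldots,C_{k-1}\}$ first to maximize the number of cycles and then, subject to that, to minimize $\sum_i|V(C_i)|$. Assume toward a contradiction that $G$ has no $k$ such cycles, so $H:=G-\bigcup_i V(C_i)$ has no chorded cycle. A preliminary shortening step, working inside any long $C_i$, should force $|V(C_i)|\leq 10t-1$ for every $i$; this gives $|V(H)|\geq n-(10t-1)(k-1)\geq 12t+13$, so the base case applies to $H$ and yields an independent set $X\subseteq V(H)$ with $|X|=t$ and $\sum_{x\in X}\deg_H(x)\leq 2t$. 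Combined with $\sum_{x\in X}\deg_G(x)\geq 3kt-t+1$, this forces at least $3(k-1)t+1$ edges from $X$ into $\bigcup_i V(C_i)$, so some $C_j$ receives at least $3t+1$ edges from $X$.

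The main obstacle is the resulting local swapping lemma: given a chorded cycle $C_j$ in the minimizing family together with at least $3t+1$ edges from a low-$H$-degree independent set $X$, produce either a chorded cycle in $H\cup V(C_j)$ strictly shorter than $C_j$ (contradicting minimality) or a chorded cycle vertex-disjoint from $\mathcal{C}$ (contradicting maximality). The plan is a careful case analysis on the chord pattern of $C_j$ and the neighborhood pattern of $X$ on $C_j$: if one $x\in X$ has three or more neighbors on $C_j$, it should yield a shorter chorded cycle through $x$; if several vertices of $X$ share two neighbors on $C_j$, they can be spliced into $C_j$ to manufacture new chords and shorter cycles. Executing these swaps uniformly for all configurations of chords and neighbor positions is the delicate part, and it is precisely where the precise bound $3t+1$ (rather than something smaller) is used; the constants $10t-1$ and $12t+13$ in the order hypothesis are calibrated to make these two steps, the long-cycle shortening and the base case in $H$, simultaneously available.
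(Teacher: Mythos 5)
Your overall skeleton matches the paper's (obtain $k-1$ chorded cycles, shorten them to length at most $10t-1$, find a low-degree independent $t$-set in $H$, and count the at least $3t+1$ edges it sends to some $C_j$), but there are two genuine gaps. The first is in your base case: the claim that a graph has no chorded cycle iff every block is a bridge or an induced cycle is false. A $2$-connected graph that is not a cycle need not contain a chorded cycle; $K_{2,3}$ (and more generally any theta graph) is $2$-connected, is not a cycle, and has no chorded cycle, since every cycle in it is induced. Adding a non-trivial ear of length at least $2$ does not create a chord. So non-chorded graphs are not cacti, and your block-tree counting argument does not apply to them. This is exactly why the paper spends all of Section 2 (Propositions \ref{triangle}--\ref{graphdecomp} and Lemma \ref{indep}) on an ear-decomposition of non-chorded biconnected components and a count of degree-$2$ vertices in them, arriving at an independent set of order $n/12$ of vertices of degree at most $2$. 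The statement you need for the base case is true, but your route to it is broken and the correct argument is substantially more work.

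The second gap is the endgame, which you explicitly leave unexecuted, and it is not a routine case analysis. Knowing that $X$ sends at least $3t+1$ edges to $C_j$ does not by itself yield a shorter cycle or a new disjoint cycle: the paper first needs Lemma \ref{lemma:C is K4 or 6-cycle} (no vertex of $H$ sends more than $4$ edges to a cycle in a minimal family, and a vertex of degree $4$ forces $|C_j|=4$), then a degree-sequence analysis reducing to the cases $(4,4,4,2,2)$ and $(4,3,\ldots,3)$, and in the latter case it cannot always produce two disjoint chorded cycles; instead it derives a contradiction by reducing the number of components of $H$ or increasing the number of $K_4$'s in $\mathscr{C}$. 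Your extremal choice (maximize the number of cycles, then minimize total order) lacks the secondary optimization conditions --- minimizing $comp(H)$ and maximizing the number of $K_4$'s, i.e.\ (A2) and (A3) in the paper --- without which the $(4,3,\ldots,3)$ configuration with $H$ disconnected cannot be refuted. The paper also has to choose the $t$-set $I$ carefully (including non-cut endpoints of longest paths from distinct components of $H$) to make those swaps work, a point absent from your sketch.
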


%% Remark %%
\noindent
{\it Remark.}  
To see the sharpness of the degree condition of Theorem \ref{thm:main}, 
for $n$ sufficiently large order, consider the complete bipartite graph 
$B = K_{3k-1, n - 3k + 1}$.  
Then $\sigma_t (B) = t(3k-1)$.  
Further, it is not possible to construct $k$ vertex-disjoint chorded cycles in $B$, 
as any chorded cycle must use three vertices from the partite set of order $3k-1$.

\vspace{0.25cm} 

All graphs considered here are simple and undirected finite.   
For terms not defined here see \cite{G}.  Let $G$ be a graph. 
Let $H$ be a subgraph of $G$, and let $S\subseteq V(G)$.
For $u\in V(G)$, the set of neighbors of $u$ in $G$ is denoted by $N_G(u)$, 
and we denote $\deg_G(u)=|N_G(u)|$ and  $N_H(u)=N_G(u)\cap V(H)$.
For $u\in V(G)-S$, $N_S(u)=N_G(u)\cap S$.  
Furthermore, $N_G(S)=\bigcup_{w\in S}N_G(w)$ and $N_H(S)=N_G(S)\cap V(H)$. 
Let $A, B$ be two vertex-disjoint subgraphs of $G$. 
Then $N_G(A)=N_G(V(A))$ and $N_B(A)=N_G(A)\cap V(B)$.   
The subgraph of $G$ induced by $S$ is denoted by $\langle S \rangle$. 
Let $G-S=\langle V(G)-S\rangle$ and 
$G-H=\langle V(G)-V(H)\rangle$. 
If $S=\{u\}$, then we write $G-u$ for $G-S$. 
If there is no fear of confusion, then we use the same symbol 
for a graph and its vertex set. 
For two disjoint graphs $G_1$ and $G_2$, $G_1\cup G_2$ denotes the disjoint
union of $G_1$ and $G_2$.  Let $Q$ be a path or a cycle with a 
given orientation and $x\in V(Q)$.  
Then $x^+$ denotes the first successor of $x$ on $Q$ and 
$x^-$ denotes the first predecessor of $x$ on $Q$. 
If $x, y\in V(Q)$, then $Q[x,y]$ denotes the path of $Q$ from $x$ to $y$ 
(including $x$ and $y$) in the given direction. 
The reverse sequence of $Q[x,y]$ is denoted by $Q^-[y,x]$. 
We also write $Q(x,y]=Q[x^+,y]$, $Q[x, y)=Q[x, y^-]$ and $Q(x, y)=Q[x^+, y^-]$. 
If $Q$ is a path (or a cycle), say $Q = x_1,x_2, \ldots ,x_t (,x_1)$, then 
we assume that an orientation of $Q$ is given from $x_1$ to $x_t$. 
If $P$ is a path connecting $x$ and $y$, 
then we denote the path $P$ with an orientation from $x$ to $y$ as $P[x,y]$. 
The reverse sequence of $P[x,y]$ is denoted by $P^-[y,x]$.  
For $X\subseteq V(G)$, let $\deg_H(X)=\sum_{x\in X} \deg_H(x)$. 
If $H=G$, then we denote $\deg_G(X)=\deg_H(X)$. 
If $G$ is one vertex, that is, $V(G)=\{x\}$, then we simply write $x$ instead of $G$. 
For an integer $r\geq 1$ and two vertex-disjoint subgraphs $A, B$ of $G$, 
we denote by $(d_1, d_2, \ldots , d_r)$ a degree sequence from $A$ to $B$ such that 
$\deg_B(v_i)\geq d_i$ and $v_i\in V(A)$ for each $1\leq i\leq r$.  
In this paper, since it is sufficient to consider the case of equality in the above inequality, 
when we write $(d_1, d_2, \ldots , d_r)$, we assume $\deg_B(v_i)=d_i$ for each $1\leq i\leq r$.    
For $X,Y\subseteq V(G)$, $E(X,Y)$ denotes the set of edges of $G$ connecting a vertex in $X$ 
and a vertex in $Y$. 
A cycle of length $\ell$ is called a {\it $\ell$-cycle}. 
For a graph $G$, $comp(G)$ is the number of components of $G$. 
Let $R$ be a graph. 
If $G$ has no induced subgraph isomorphic to $R$, then $G$ is called {\it R-free}.  

%%% Chapter 2 %%%
\section{Graphs with No Chorded Cycles}

\hspace*{0.5cm} 
In this section, we examine some useful properties of graphs that contain no chorded cycles. 
Our ultimate goal is to show they contain large independent sets of small degree sum. 
This will be important in our proof later.

%%% Lem. 1 %%%
\begin{lem} 
Let $T$ be a tree of order $n\geq 2$. 
Then the following statements hold. 

\vspace{0.2cm}

\noindent
$(i)$ $T$ has at least $n/2+1$ vertices of degree at most $2$. 

\vspace{0.2cm}

\noindent
$(ii)$ $T$ contains an independent set $I$ of order at least $n/4$ with each vertex of $I$ 
having degree at most $2$ in $T$.
%Every tree $T$ of order $n\geq 2$ has at least $n/2+1$ vertices of degree at most $2$. 
\label{tree}
\end{lem}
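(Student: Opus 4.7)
The plan is to handle (i) by a handshake-lemma style count and then derive (ii) from (i) by exploiting the structural simplicity of the subgraph induced by the low-degree vertices.

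For (i), I would let $a$ denote the number of vertices of $T$ with degree at most $2$ and set $b = n - a$, the number of vertices of degree at least $3$. Since $T$ is a tree of order $n \geq 2$, every vertex has degree at least $1$ and the handshake lemma gives $\sum_{v\in V(T)} \deg_T(v) = 2(n-1)$. Bounding the sum below by $a\cdot 1 + b\cdot 3 = 3n - 2a$ yields $3n - 2a \leq 2n - 2$, which rearranges to $a \geq n/2 + 1$, as required.

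For (ii), let $S$ be the set of vertices of $T$ of degree at most $2$, so by (i) we have $|S| \geq n/2 + 1$. The induced subgraph $\langle S\rangle$ is a subforest of $T$, and since each vertex of $S$ already has degree at most $2$ in $T$, the maximum degree of $\langle S\rangle$ is at most $2$. Hence $\langle S\rangle$ is a disjoint union of paths (with isolated vertices counted as length-$0$ paths). A path on $p$ vertices contains an independent set of size $\lceil p/2 \rceil \geq p/2$, so taking the union of maximum independent sets across the components produces an independent set $I \subseteq S$ of size at least $|S|/2 \geq n/4 + 1/2 > n/4$, and every vertex of $I$ has degree at most $2$ in $T$ by construction.

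There is no substantive obstacle in either part; both are short counting arguments. The only subtlety worth flagging explicitly is that one must observe that $\langle S\rangle$ inherits the maximum-degree bound from $T$, which is what lets it decompose as a disjoint union of paths so that the standard independence-number bound $\lceil p/2 \rceil$ applies component-wise.
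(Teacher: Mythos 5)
Your proof is correct. Part (i) follows essentially the paper's own route: both are handshake-lemma counts against the edge count $n-1$ of a tree. The paper separates leaves from stems and first derives $\ell \geq b+2$ before assembling the bound, whereas you bound each low-degree vertex's contribution below by $1$ and each branch vertex's by $3$ in a single step; both land on $a \geq n/2+1$. Part (ii) is where you genuinely diverge. The paper simply notes that $T$ is bipartite, so one partite class contains at least half of the low-degree set $L$, and a subset of a partite class is automatically independent. You instead examine the induced subgraph $\langle S\rangle$, observe that it is a forest of maximum degree at most $2$ (hence a disjoint union of paths), and take alternate vertices on each path; since $\langle S\rangle$ is induced, independence in $\langle S\rangle$ coincides with independence in $T$, so this is sound and yields the same bound $|I| \geq |S|/2 \geq n/4$. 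The paper's bipartiteness observation is shorter, but your path-decomposition argument does not invoke bipartiteness of the ambient graph at all, and it is closer in spirit to what the paper itself does later in the proof of Lemma~\ref{indep}, where it must analyze $\langle S\rangle$ for a non-chorded graph rather than a tree.
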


\begin{proof}
Let $\{v_1, \ldots, v_b\}$ be the set of branch vertices in $T$. 
Let $\ell$ be the number of leaves in $T$ and $s$ be the number of stem vertices. 
Clearly $\ell + s + b = n$.  
Since $T$ has $n-1$ edges, the degree sum of $T$ is 
\begin{align*}
2(n-1) = \ell + 2s + \sum_{i=1}^b \deg_T(v_i) \geq \ell + 2(n-\ell-b) + 3b,
\end{align*}
which implies $\ell \geq b+2$. 
Consequently, 
\begin{align*}
\ell+s \geq (b+2)+s &= (b+s)+2=(n-\ell)+2 \\
2\ell +s &\geq n+2 \\
\ell + \frac{s}{2} &\geq \frac{n}{2}+1.
\end{align*}
If $L$ is the set of all leaves and stems in $T$, then 
\begin{align*}
|L| = \ell + s \geq \ell + \frac{s}{2} \geq \frac{n}{2}+1. 
\end{align*}
Thus $(i)$ holds. 

Since $T$ is bipartite, one of the partite sets contains at least half the vertices of $L$. 
Thus $T$ contains an independent subset $I\subset L$ with $|I| \geq |L|/2 \geq  n/4$, and 
$(ii)$ holds. 
\end{proof}

%%% Lem. 2 %%%
\begin{lem} 
If $H$ is a non-chorded graph of order $n$, then $H$ contains an independent set 
$I$ of order at least $n/12$ with each vertex of $I$ having degree at most $2$ in $H$.
\label{indep}
\end{lem}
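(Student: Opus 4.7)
The plan is to show that the set $L:=\{v\in V(H):\deg_H(v)\le 2\}$ has size at least $n/4$, and then to extract the required independent set from $L$. Since $\langle L\rangle$ has maximum degree at most $2$, it is a disjoint union of paths and cycles, and any such graph contains an independent set of size at least one third of its order. Applying this to $\langle L\rangle$ yields an independent set $I\subseteq L$ with $|I|\ge |L|/3\ge n/12$, and $\deg_H(v)\le 2$ for every $v\in I$ by construction.

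To prove $|L|\ge n/4$, I would first reduce to the connected case by induction on $n$, applying the bound to each component and taking the union in the disconnected case. For connected $H$, I plan to fix a DFS spanning tree $T$ of $H$ and use the chord-cycle-free hypothesis to control the non-tree (back) edges. A leaf $\ell$ of $T$ satisfies $\deg_H(\ell)\le 2$: if $\ell$ had back edges to two distinct ancestors $u_1,u_2$ with $u_1$ lying strictly between $\ell$ and $u_2$ on the tree path from $\ell$ to $u_2$, then $\ell u_1$ would be a chord of the fundamental cycle $T[\ell,u_2]+\ell u_2$, contradicting the hypothesis. A parallel chord argument at stems of $T$ (vertices with $\deg_T=2$), together with an ear-decomposition bound on each $2$-connected chord-cycle-free block of $H$ (in such a block every ear must have length at least $2$ to avoid creating a chord of an already present chord-free cycle, forcing many block-interior degree-$2$ vertices), allows me to account for enough low-degree vertices globally to guarantee $|L|\ge n/4$. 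Lemma 1(i) can then be applied to the block-cut tree of $H$ to sharpen the counting if needed.

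The hardest step is translating block-interior low-degree counts into low-$\deg_H$ counts: a cut vertex $v$ shared among multiple blocks satisfies $\deg_H(v)=\sum_B\deg_B(v)$, which can exceed $2$ even when $\deg_B(v)=2$ in every block. Balancing these cases via the DFS chord argument and the ear-decomposition bound, and avoiding double-counting of cut vertices, is where the proof requires the most care.
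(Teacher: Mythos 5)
Your overall architecture (bound the size of $L=\{v:\deg_H(v)\le 2\}$, then extract an independent set from $\langle L\rangle$) is the same as the paper's, but your proof has a genuine gap: the pivotal quantitative claim $|L|\ge n/4$ is asserted, not proved. Everything you say about how you would prove it --- the DFS leaf argument, a ``parallel chord argument at stems,'' an ear-decomposition bound inside each block, and then ``balancing'' at cut vertices --- is a plan whose hardest step you yourself flag as unresolved. That step is exactly where all the work is: the paper devotes Propositions~\ref{triangle}--\ref{graphdecomp} to showing (a) every non-chorded biconnected block of order $m$ has at least $(m-2)/3+2$ vertices of degree $2$ \emph{in the block} (the technical heart is Proposition~\ref{stem1}, which shows each ear's interior retains a degree-$2$ vertex in the \emph{final} graph, not just at the stage the ear is added), and (b) an induction over the block-and-path decomposition in which each attachment destroys at most two low-degree vertices, giving $|S_i|\ge |F_i|/5+2$ and ultimately $|S|\ge n/6$. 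The DFS leaf argument is correct but only controls leaves of the DFS tree, of which there may be very few; a local argument at DFS stems cannot substitute for the global block analysis.

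Worse, your target constant leaves you no room for error. Take a tree of $K_{2,3}$'s glued to one another at their degree-$2$ vertices: every block is a non-chorded $K_{2,3}$, all cycles live inside blocks, so the graph is non-chorded, and a short count shows it has exactly $(n+7)/4$ vertices of degree at most $2$. So $|L|\ge n/4$ is asymptotically tight, and any lossy step in the cut-vertex accounting (the paper's own inductive constant is $1/5$, not $1/4$) sinks the argument. You are forced to aim for $n/4$ because your final extraction step only recovers $|L|/3$ (since $\langle L\rangle$ could a priori contain odd cycles). The paper sidesteps this by proving only $|S|\ge n/6$ and then observing that $\langle S\rangle$ is a forest unless $H$ is itself a cycle, so it can take \emph{half} of $S$ rather than a third. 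If you adopted that observation you would only need $|L|\ge n/6$, which is what the decomposition machinery actually delivers; as written, your proof both omits the machinery and demands more of it than it gives.
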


Before proving Lemma \ref{indep}, we state and prove some helpful propositions.

\vspace{0.25cm}

%%% Def. %%%
\noindent
{\bf Definitions.} 
A {\it biconnected graph} is a non-separable graph. 
Note that any two vertices (two edges) of a biconnected graph lie on a common cycle.  
A {\it non-chorded} graph is a graph not containing any chorded cycles. 
A {\it leaf} is a vertex of degree 1. 
A {\it stem} is a vertex of degree 2. 
A {\it branch} is a vertex of degree at least 3. 

%%% Prop. 1 %%%
\begin{prop}
\label{triangle}
Every non-chorded biconnected graph $H$ of order at least four is triangle-free.
\end{prop}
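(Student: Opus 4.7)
The plan is to argue by contradiction: suppose the biconnected non-chorded graph $H$ of order at least four contains a triangle $T$ on vertices $x,y,z$, and construct a chorded cycle in $H$.

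Since $|V(H)|\geq 4$, there is some vertex $w\in V(H)\setminus\{x,y,z\}$. The first step is to use the 2-connectedness of $H$ to invoke the fan lemma (an immediate corollary of Menger's theorem for $k=2$): there exist two internally vertex-disjoint paths $P_1$ and $P_2$ from $w$ to two distinct vertices of $\{x,y,z\}$, each path meeting $\{x,y,z\}$ only at its endpoint. After relabeling, we may assume $P_1$ terminates at $x$ and $P_2$ at $y$; in particular, neither path passes through $z$.

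The second step is to form the cycle
\[
C \;=\; x\,P_1^{-}\,w\,P_2\,y\,z\,x,
\]
that is, traverse $P_1$ from $x$ back to $w$, then $P_2$ from $w$ to $y$, and finally the triangle edges $yz$ and $zx$. The internal disjointness of $P_1$ and $P_2$, together with the fact that they avoid $z$, guarantees that $C$ is a simple cycle. On $C$, the two neighbors of $x$ are $z$ and the first vertex of $P_1$ after $x$; the two neighbors of $y$ are $z$ and the first vertex of $P_2$ after $y$. Since $w\notin\{x,y\}$, each of the two arcs of $C$ between $x$ and $y$ has length at least $2$, so $x$ and $y$ are non-consecutive on $C$. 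The triangle edge $xy$ is therefore a chord of $C$, exhibiting a chorded cycle in $H$ and contradicting the hypothesis.

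The only delicate point is the simultaneous achievement of three properties for the two paths (internal disjointness, distinct triangle endpoints, and avoidance of the third triangle vertex); this is precisely the content of the fan lemma applied with source $w$ and target set $\{x,y,z\}$. Once those paths are in hand, verifying that $xy$ is a chord of the constructed cycle is a short inspection.
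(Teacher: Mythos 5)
Your proof is correct, but it follows a different route from the paper's. You invoke the fan lemma (Menger for $k=2$) to obtain two internally disjoint paths $P_1,P_2$ from an outside vertex $w$ to two distinct triangle vertices $x,y$, each avoiding the third vertex $z$, and then assemble the cycle $x\,P_1^-\,w\,P_2\,y\,z\,x$, on which the triangle edge $xy$ is a chord because both arcs between $x$ and $y$ have length at least $2$. The paper instead picks a neighbor $d$ of a triangle vertex $a$ outside the triangle, uses the fact (recorded in its definition of biconnected graphs) that the two edges $ab$ and $ad$ lie on a common cycle $C$, and splits into cases: if $bc\in E(C)$ then $ac$ is a chord of $C$, and otherwise $\langle C\cup c\rangle$ contains a cycle with chord $ab$. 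Your argument buys a single uniform construction with no case analysis, at the cost of appealing to the fan lemma rather than only the ``two edges on a common cycle'' property the paper has already set up; the paper's version is slightly more elementary but must verify two cases (and implicitly the subcase $c\in V(C)$ versus $c\notin V(C)$). Both arguments are sound; the only detail worth making explicit in yours is that $y\notin V(P_1)$ and $x\notin V(P_2)$ (which follows from the fan lemma's guarantee that each path meets $\{x,y,z\}$ only at its endpoint), so that $xy$ is genuinely absent from the cycle and hence a chord.
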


\begin{proof}
Suppose $H$ contains a triangle on vertices $a,b,c$. 
Since $H$ is connected, without loss of generality, we can say $a$
has some neighbor $d\in V(H)-\{b,c\}$. 
Since $H$ is biconnected, edges $ab$ and $ad$ must lie on a common cycle in $H$.  
Let $C$ be such a cycle. 
If $C$ contains edge $bc$, then $ac$ is a chord on the cycle, a contradiction.  
If $C$ does not contain $bc$, then $\langle C \cup c\rangle$ contains 
a cycle with chord $ab$, a contradiction.
\end{proof}

%%% Prop. 2 %%%
\begin{prop} 
\label{decomp}
Let $k\geq 1$ be an integer. 
If $H$ is a non-chorded biconnected graph of order at least four, then $E(H)$ 
can be decomposed into 

\begin{itemize}
\item 
a cycle $C = F_0$, and 
\item 
if $C$ is not a hamiltonian cycle in $H$, 
then a sequence of paths $P_1, \ldots, P_k$ $($each with at least two edges$)$ 
where the endpoints of $P_i$ are $a_i$, $b_i$ $(a_i\ne b_i)$, 
\end{itemize}

\noindent
such that there exists a sequence of subgraphs $F_1, \ldots, F_k$ of $H$, 
where for all $1\leq i\leq k$, 

\vspace{0.2cm}

\noindent
$(i)$ $F_i = P_i \cup F_{i-1}$, 

\vspace{0.2cm}

\noindent
$(ii)$ $V(P_i) \cap V(F_{i-1}) = \{a_i, b_i\}$, 

\vspace{0.2cm}

\noindent
$(iii)$ $F_i$ is a non-chorded biconnected graph, and

\vspace{0.2cm}

\noindent
$(iv)$ $F_k = H$. 
\end{prop}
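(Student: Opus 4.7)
The plan is to construct the decomposition greedily, in the spirit of Whitney's classical ear decomposition for $2$-connected graphs, exploiting the non-chorded hypothesis to force every ear to have at least two edges.

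Since $H$ is biconnected and of order at least four, it contains a cycle; pick any such cycle and call it $F_0$. If $F_0$ is a Hamiltonian cycle of $H$, the decomposition is complete. Otherwise, proceed by induction: suppose a non-chorded biconnected subgraph $F_{i-1}$ of $H$ with $F_{i-1}\ne H$ has been constructed. I will produce an ear $P_i$ of length at least two with distinct endpoints $a_i,b_i\in V(F_{i-1})$ and interior vertices in $V(H)\setminus V(F_{i-1})$, and set $F_i := F_{i-1}\cup P_i$.

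The key observation is that $H$ contains no edge $uv$ with $u,v\in V(F_{i-1})$ and $uv\notin E(F_{i-1})$: since $F_{i-1}$ is biconnected, $u$ and $v$ lie on a common cycle $C$ of $F_{i-1}$, and then $uv$ would be a chord of $C$ in $H$, contradicting that $H$ is non-chorded. In particular $V(F_{i-1})\ne V(H)$, so by connectivity of $H$ we may choose an edge $uv\in E(H)$ with $u\in V(F_{i-1})$ and $v\in V(H)\setminus V(F_{i-1})$. Since $H$ is $2$-connected, $H-u$ is connected, so there is a path in $H-u$ from $v$ to $V(F_{i-1})$; let $Q$ be a shortest such path, terminating at some $w\in V(F_{i-1})$ with $w\ne u$. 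Define $P_i$ as the path obtained by prepending the edge $uv$ to $Q$. By minimality of $Q$, the internal vertices of $P_i$ lie entirely in $V(H)\setminus V(F_{i-1})$, so $P_i$ has length at least two, distinct endpoints $a_i:=u$ and $b_i:=w$, and $V(P_i)\cap V(F_{i-1})=\{a_i,b_i\}$.

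Setting $F_i := P_i\cup F_{i-1}$, properties $(i)$ and $(ii)$ are immediate. For $(iii)$, $F_i$ is biconnected by the standard fact that attaching an internally disjoint path between two distinct vertices of a $2$-connected graph preserves $2$-connectivity, and $F_i$ is non-chorded as a subgraph of the non-chorded graph $H$, since any chord of a cycle in $F_i$ is a chord of that same cycle in $H$. Because each step strictly enlarges the vertex set and $H$ is finite, the process terminates with some $F_k=H$, giving $(iv)$. The main subtlety is the step forcing each ear to have at least two edges; this is where the non-chorded hypothesis is essential, as it rules out any ear consisting of a single chord across $F_{i-1}$. The rest is a direct adaptation of Whitney's ear decomposition.
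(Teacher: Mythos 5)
Your proof is correct and follows essentially the same route as the paper's: a greedy ear decomposition in which the non-chorded hypothesis is what rules out single-edge ears (chords) and forces $|E(P_i)|\geq 2$. The only difference is mechanical --- the paper locates each ear by taking a cycle through a not-yet-covered edge and an edge of $F_{i-1}$ (using that any two edges of a biconnected graph lie on a common cycle), whereas you locate it via a shortest path in $H-u$ from a vertex outside $F_{i-1}$ back to $V(F_{i-1})$; both yield the same decomposition.
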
 

%\begin{enumerate}
%\renewcommand{\labelenumi}{(\arabic{enumi})}
%\item $F_i = P_i \cup F_{i-1}$, 
%\item $V(P_i) \cap V(F_{i-1}) = \{a_i, b_i\}$, 
%\item $F_i$ is a non-chorded biconnected graph, and 
%\item $F_k = H$. 
%\end{enumerate}

\begin{proof}
Let $C$ be a cycle in $H$. 
Note that $H$ is triangle-free by Proposition \ref{triangle}, and in particular, 
$C$ is not a triangle.  
Let $F_0 = C$ and let $E_1 = E(H) \setminus E(F_0)$.  
If $C$ is a hamiltonian cycle in $H$, then since $H$ is non-chorded, $E_1=\emptyset$. 
For each $i\geq 1$, if $E_i\ne \emptyset$, do the following:
Select any $f\in E(F_{i-1})$ and any $e_i\in E_i$. 
Since $H$ is biconnected, there exists a cycle $C_i$ in $H$ containing $f$ and $e_i$. 
Let $P_i$ be a path in $C_i$ containing $e_i$ 
so that the endpoints of $P_i$ are in $V(F_{i-1})$. 
Note that $|E(P_i)| \geq 2$. 
Call these endpoints $a_i,b_i$, and assume that $ V(P_i) \cap V(F_{i-1}) = \{a_i, b_i\}$. 
Let $F_i = P_i \cup F_{i-1}$. 
Since $H$ is non-chorded biconnected, the graph $F_i$ is also non-chorded biconnected. 
Let $E_{i+1} = E_i \setminus E(P_i)$.  
Let $k+1$ be the minimum index so that $E_{k+1}$ is empty.  Then $F_k = H$.
\end{proof}

%%% Prop. 3 %%%
\begin{prop}
\label{stem1}
Let $k\geq 1$ be an integer. 
Let $C=F_0$ be any cycle of order at least four, let $P_1, \ldots, P_k$ be a sequence of paths 
$($each with at least two edges$)$ such that for each $1\leq i\leq k$, 
$P_i$ is a path from $a_i$ to $b_i$ {\rm (}$a_i\ne b_i${\rm )}, and 
let $F_1, \ldots, F_k$ be a sequence of graphs such that for each $1\leq i\leq k$, 

\vspace{0.2cm}

\noindent
$(i)$ $F_i = P_i \cup F_{i-1}$, 

\vspace{0.2cm}

\noindent
$(ii)$ $V(P_i) \cap V(F_{i-1}) = \{a_i, b_i\}$, and 

\vspace{0.2cm}

\noindent
$(iii)$ $F_i$ is a non-chorded biconnected graph. 

\vspace{0.2cm}

%\begin{enumerate}
%\renewcommand{\labelenumi}{(\arabic{enumi})}
%\item $F_i = P_i \cup F_{i-1}$, 
%\item $V(P_i) \cap V(F_{i-1}) = \{a_i, b_i\}$, and 
%\item $F_i$ is a non-chorded biconnected graph. 
%\end{enumerate}

\noindent
Then for each $1\leq i\leq k$, there exists some vertex $v\in P_i(a_i, b_i)$ 
such that $\deg_{F_k}(v)=2$. 
Further, there exist two distinct vertices $x,x'\in V(C) \setminus \bigcup_{i=1}^k V(P_i)$ 
such that $\deg_{F_k}(x) = \deg_{F_k}(x') = 2$. 
\end{prop}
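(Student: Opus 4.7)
The plan is to derive both assertions from two stronger structural claims, each proved by a chord-forcing argument that uses non-chordedness together with the biconnectedness of the intermediate graphs $F_j$.

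The first claim is that no internal vertex of any ear $P_i$ is ever used as an endpoint of a later ear $P_j$ ($j>i$). Suppose for contradiction that $u_r\in\mathrm{int}(P_i)$ equals $a_{j_0}$ for the smallest such $j_0>i$; then $u_r$'s $F_{j_0-1}$-neighbors are exactly its $P_i$-neighbors $u_{r-1}$ and $u_{r+1}$. Using biconnectedness of $F_{j_0-1}$, I would construct a cycle $E$ in $F_{j_0}$ containing the new edge $u_r w_1$ from $P_{j_0}$ together with a closing path in $F_{j_0-1}$ passing through both $u_{r-1}$ and $u_{r+1}$ (routed, for instance, through the arc of $C_i=P_i\cup Q_i$ not containing $u_r$). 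Then one of $u_{r\pm 1}$ lies on $E$ but is non-adjacent to $u_r$ in $E$, producing the chord $u_r u_{r\mp 1}$ of $E$, contradicting non-chordedness of $F_{j_0}$. The second claim is that the pairs $\{a_j,b_j\}$ (which, by the first claim, lie on $V(C)$) form a non-crossing family of chords of $C$: if two ears had $V(C)$-endpoints that interleave on $C$, then stringing them together with the two complementary $C$-arcs would produce a cycle containing an edge of $C$ as a chord.

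Given these two claims, the proposition follows directly. For the first statement, every internal vertex of every $P_i$ has degree $2$ in $F_k$ since no internal is ever used as an endpoint, so any internal vertex serves as the required witness. For the second, the endpoints of the ears form a non-crossing family of chords on $C$, and the standard combinatorial fact that any non-crossing chord family on an $n$-cycle leaves at least two vertices uncovered (the ``ears'' of any triangulation extending the family) supplies two vertices $x,x'\in V(C)$ not used as endpoints, hence of degree $2$ in $F_k$.

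The main obstacle is the chord-forcing case analysis. For the first claim, the construction of the cycle $E$ through $u_r w_1$ encircling both $u_{r-1}$ and $u_{r+1}$ relies on biconnectedness of $F_{j_0-1}$ to find an appropriate closing path from $b_{j_0}$, and a case split is needed depending on whether $b_{j_0}$ lies on $C_i$ or not. For the second claim, the precise notion of interleaving and the identification of the guaranteed chord in the combined cycle must be spelled out carefully, especially when the two ears share an endpoint or when one of them has only a single endpoint on $V(C)$.
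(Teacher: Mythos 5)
Your first structural claim --- that no internal vertex of an ear $P_i$ is ever used as an endpoint of a later ear $P_j$, so that every internal vertex of every $P_i$ has degree $2$ in $F_k$ --- is false, and since both halves of your argument rest on it, the proof does not go through. Here is a counterexample. Let $C$ be the $8$-cycle $c_1,\ldots,c_8,c_1$, let $P_1 = c_1,u_1,u_2,u_3,c_5$, and let $P_2 = u_2,v,c_3$. Then $F_1$ is a theta graph, and in $F_2$ every edge has at least one endpoint of degree $2$ (the vertices of degree $3$ are exactly $c_1,c_3,c_5,u_2$, and they are pairwise non-adjacent), so no edge of $F_2$ can be a chord of any cycle; hence $F_2$ is non-chorded and biconnected, all hypotheses of the proposition are met, and yet $a_2=u_2\in P_1(a_1,b_1)$ has degree $3$ in $F_2$. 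The specific step that fails in your argument is the construction of the cycle $E$ through the edge $u_rw_1$ containing both $u_{r-1}$ and $u_{r+1}$: biconnectedness gives a cycle through any two prescribed \emph{edges}, not through a prescribed edge and two prescribed vertices. In the example, any cycle containing both $u_1$ and $u_3$ must use the two edges $u_1u_2$ and $u_2u_3$ at $u_2$ and therefore cannot use $u_2v$, so no such $E$ exists. Your second claim is false as well: attaching the two ears $c_1$--$c_5$ and $c_3$--$c_7$ to the $8$-cycle produces a subdivision of $K_4$, which is non-chorded and biconnected at every stage even though the endpoint pairs interleave on $C$. (And even granting both claims, a non-crossing family of pairs with shared or adjacent endpoints need not leave two vertices of $C$ uncovered.)

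The conclusion of the proposition is genuinely weaker than what you tried to prove: only \emph{some} internal vertex of each $P_i$ is guaranteed to have degree $2$, not all of them. The paper's proof reflects this. It fixes one ear $P_\ell = v_0,\ldots,v_t$, assumes for contradiction that \emph{every} internal vertex $v_1,\ldots,v_{t-1}$ has degree at least $3$ in $F_k$, and then runs an induction (its Claim 1) producing, for each $i$, a path $S_i$ that leaves $P_\ell$ at $v_i$, avoids $F_{\ell-1}\setminus\{v_0,v_t\}$ and the interior of $S_{i-1}$, and re-enters $P_\ell$ at some $v_j$ with $j\geq i+2$; pushing this to $i=t-1$ forces a chorded cycle. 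The two degree-$2$ vertices on $C$ are then obtained not by a chord-counting argument but by re-decomposing $F_1$: viewing $C[a_1,b_1]\cup P_1^{-}[b_1,a_1]$ as the base cycle and $C(b_1,a_1)$ as an ear (and symmetrically for the other arc), the first assertion yields one degree-$2$ vertex in each open arc of $C$ between $a_1$ and $b_1$. If you want to salvage your approach, you would need to replace the ``all internal vertices'' claim with an argument of this existential type.
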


\begin{proof}
Suppose for a contradiction that for some $1\leq \ell \leq k$, 
$\deg _{F_k}(v)\geq 3$ for all $v\in P_\ell(a_\ell, b_\ell)$.  
Let $P_\ell : v_0=a_{\ell}, v_1, \ldots, v_{t-1}, v_t=b_{\ell}$, and 
let $F=F_{\ell-1}\setminus \{v_0,v_t\}$. 
Note that since $F_k$ is non-chorded biconnected graph of order at least four, 
$F_k$ is triangle-free by Proposition \ref{triangle}. 

%% Claim 1 %%
\begin{claim}
\label{paths}
For each $1\leq i\leq t-2$, there exists a path $S_i$ in $F_k$ from $v_i$ to $v_j$ for some 
$i+2\leq j\leq t$ such that $S_i(v_i,v_j) \cap V(P_{\ell})=\emptyset$ and 
$V(S_i)\cap V(F)=\emptyset$.  
\end{claim}

\begin{proof}
We prove Claim \ref{paths} by induction. 
Since $\deg _{F_k}(v)\geq 3$ for all $v\in P_\ell(a_\ell, b_\ell)$,  
there exists a neighbor $u_i$ of $v_i$ with $u_i \not\in \{v_{i-1}, v_{i+1}\}$ 
for each $1\leq i\leq t-2$. 
Since $F_k$ is biconnected, there exists a path $S_i$ in $F_k$ starting with $v_i, u_i,\ldots,$ 
terminating at $v_j$ with $i\ne j$ such that $S_i(v_i,v_j)\cap V(P_{\ell})=\emptyset$. 

First we prove the case where $i=1$. 
Suppose $V(S_1)\cap V(F) \ne \emptyset$.  
Then there exists a vertex $w\in V(S_1)\cap V(F)$ such that 
$S_1(v_1,w)\cap V(F)=\emptyset$. 
Since $F_{\ell-1}$ is biconnected, there exists a cycle $C_1 \subseteq F_{\ell-1}$ containing $v_0$ and $w$.  
We assume that an orientation of $C_1$ is given from $v_0$ to $w$. 
Suppose $v_t \in V(C_1)$, so $v_t \in C_1(v_0,w)$ or $v_t \in C_1(w,v_0)$.  
Without loss of generality, we may assume that $v_t \in C_1(v_0,w)$. 
Then $$P_\ell[v_1,v_t],C_1^-[v_t,v_0],C_1^-[v_0,w],S_1^-[w,v_1]$$ 
is a cycle with chord $v_0v_1$, a contradiction (see Figure 1). 

\begin{figure}[hbpt]
\centering
\definecolor{ccqqqq}{rgb}{0.8,0,0}
\begin{tikzpicture}[line cap=round,line join=round,>=triangle 45,x=1cm,y=1cm]
\clip(-7.694545454545467,-4.741322314049571) rectangle (7.908760330578507,1.6057851239669338);
\draw [line width=1.2pt] (-7,0)-- (-2.5,0);
\draw [line width=1.2pt] (-7,0)-- (-7,-3.5);
\draw [line width=1.2pt] (-7,-3.5)-- (-2.5,-3.5);
\draw [line width=1.2pt] (-2.5,-3.5)-- (-2.5,0);
\draw [shift={(-4.478693982191317,-1.75)},line width=1.2pt,color=ccqqqq]  plot[domain=-0.6538690060038537:0.6538690060038537,variable=\t]({1*1.2330214559274704*cos(\t r)+0*1.2330214559274704*sin(\t r)},{0*1.2330214559274704*cos(\t r)+1*1.2330214559274704*sin(\t r)});
\draw [shift={(-4.408436048203764,-1.77592402052921)},line width=1.2pt,color=ccqqqq]  plot[domain=0.7068878401935449:2.4380383163243016,variable=\t]({1*1.194702531725067*cos(\t r)+0*1.194702531725067*sin(\t r)},{0*1.194702531725067*cos(\t r)+1*1.194702531725067*sin(\t r)});
\draw [shift={(-4.360386786435848,-1.691547145743391)},line width=1.2pt]  plot[domain=2.5189536633563394:5.528896889538317,variable=\t]({1*1.180619091760362*cos(\t r)+0*1.180619091760362*sin(\t r)},{0*1.180619091760362*cos(\t r)+1*1.180619091760362*sin(\t r)});
\draw [line width=1.2pt] (-3.5,-1)-- (-2,-1);
\draw [line width=1.2pt,color=ccqqqq] (-3.5,-2.5)-- (-1,-2.5);
\draw [line width=1.2pt,color=ccqqqq] (-2,-1)-- (-1,-1);
\draw [shift={(-1.3115516444066975,-1.75)},line width=1.2pt,color=ccqqqq]  plot[domain=-1.1770831123163763:1.177083112316376,variable=\t]({1*0.8121357196506734*cos(\t r)+0*0.8121357196506734*sin(\t r)},{0*0.8121357196506734*cos(\t r)+1*0.8121357196506734*sin(\t r)});
\draw [shift={(-3.6600839858827845,-0.6105356966537477)},line width=1.2pt,color=ccqqqq]  plot[domain=-0.2304377709326193:3.373857579996412,variable=\t]({1*1.7051572607139371*cos(\t r)+0*1.7051572607139371*sin(\t r)},{0*1.7051572607139371*cos(\t r)+1*1.7051572607139371*sin(\t r)});
\draw (-4.058181818181829,-0.9066115702479323) node[anchor=north west] {$v_0$};
\draw (-4.02512396694216,-2.05) node[anchor=north west] {$v_t$};
\draw (-2.2234710743801753,-1.0223140495867749) node[anchor=north west] {$v_1$};
\draw (-5.338264462809929,-0.9066115702479323) node[anchor=north west] {$w$};
\draw (-3.959008264462821,1.0925619834710677) node[anchor=north west] {$S_1$};
\draw (-1.1813223140495964,-1.5016528925619796) node[anchor=north west] {$P_{\ell}$};
\draw (-6.154876033057864,-1.6338842975206567) node[anchor=north west] {$C_1$};
\draw (-7.000330578512409,-2.907438016528917) node[anchor=north west] {$F_{\ell-1}$};
\draw [line width=1.2pt] (0.4818236459258367,-0.018058875759849435)-- (4.981823645925837,-0.018058875759849435);
\draw [line width=1.2pt] (0.4818236459258367,-0.018058875759849435)-- (0.4818236459258367,-3.5180588757598494);
\draw [line width=1.2pt] (0.4818236459258367,-3.5180588757598494)-- (4.981823645925837,-3.5180588757598494);
\draw [line width=1.2pt] (4.981823645925837,-3.5180588757598494)-- (4.981823645925837,-0.018058875759849435);
\draw [shift={(3.493888678760727,-1.7680588757598497)},line width=1.2pt,color=ccqqqq]  plot[domain=-0.9940135121533284:0.9940135121533287,variable=\t]({1*0.8947516595024663*cos(\t r)+0*0.8947516595024663*sin(\t r)},{0*0.8947516595024663*cos(\t r)+1*0.8947516595024663*sin(\t r)});
\draw [shift={(3.0723047107937878,-1.144284396613474)},line width=1.2pt,color=ccqqqq]  plot[domain=0.1379018723786176:3.0070242841392294,variable=\t]({1*0.9182361218545616*cos(\t r)+0*0.9182361218545616*sin(\t r)},{0*0.9182361218545616*cos(\t r)+1*0.9182361218545616*sin(\t r)});
\draw [line width=1.2pt] (3.9818236459258367,-1.0180588757598494)-- (5.481823645925836,-1.0180588757598494);
\draw [line width=1.2pt,color=ccqqqq] (3.9818236459258367,-2.5180588757598494)-- (6.481823645925836,-2.5180588757598494);
\draw [line width=1.2pt,color=ccqqqq] (5.481823645925836,-1.0180588757598494)-- (6.481823645925836,-1.0180588757598494);
\draw [shift={(6.170272001519139,-1.7680588757598494)},line width=1.2pt,color=ccqqqq]  plot[domain=-1.1770831123163772:1.1770831123163772,variable=\t]({1*0.8121357196506731*cos(\t r)+0*0.8121357196506731*sin(\t r)},{0*0.8121357196506731*cos(\t r)+1*0.8121357196506731*sin(\t r)});
\draw [shift={(3.821739660043051,-0.628594572413597)},line width=1.2pt,color=ccqqqq]  plot[domain=-0.2304377709326193:3.3738575799964123,variable=\t]({1*1.7051572607139358*cos(\t r)+0*1.7051572607139358*sin(\t r)},{0*1.7051572607139358*cos(\t r)+1*1.7051572607139358*sin(\t r)});
\draw (3.4294214876032982,-0.9231404958677669) node[anchor=north west] {$v_0$};
\draw (3.4624793388429675,-2.05) node[anchor=north west] {$v_t$};
\draw (5.264132231404952,-1.0388429752066095) node[anchor=north west] {$v_1$};
\draw (2.1593388429751986,-0.9066115702479323) node[anchor=north west] {$w$};
\draw (3.512066115702472,1.0925619834710677) node[anchor=north west] {$S_1$};
\draw (6.306280991735531,-1.518181818181814) node[anchor=north west] {$P_{\ell}$};
\draw (1.6277685950413139,-1.354545454545452) node[anchor=north west] {$C_1$};
\draw (0.4872727272727184,-2.907438016528917) node[anchor=north west] {$F_{\ell-1}$};
\draw [shift={(3.072235607437928,-1.1028245309046378)},line width=1.2pt]  plot[domain=-3.2311820308662256:0.09292288020448601,variable=\t]({1*0.9135292091949807*cos(\t r)+0*0.9135292091949807*sin(\t r)},{0*0.9135292091949807*cos(\t r)+1*0.9135292091949807*sin(\t r)});
\draw [shift={(3.506767734126701,-1.7680588757598494)},line width=1.2pt]  plot[domain=1.0061737278151013:5.277011579364485,variable=\t]({1*0.8877939622093112*cos(\t r)+0*0.8877939622093112*sin(\t r)},{0*0.8877939622093112*cos(\t r)+1*0.8877939622093112*sin(\t r)});
\draw (2.5533884297520584,-2.4933884297520583) node[anchor=north west] {$C_2$};
\draw (-5.9920661157024915,-3.933884297520649) node[anchor=north west] {Figure 1:};
\draw (0.601322314049578,-3.933884297520649) node[anchor=north west] {Figure 2:};
\draw (-4.28710743801654,-3.888884297520649) node[anchor=north west] {$v_t \in C_1(v_0,w)$};
\draw (2.289752066115694,-3.848884297520649) node[anchor=north west] {$C_2^-(v_t,v_0)\cap C_1(v_0,w)=\emptyset$};
\begin{scriptsize}
\draw [fill=black] (-3.5,-1) circle (2.5pt);
\draw [fill=black] (-3.5,-2.5) circle (2.5pt);
\draw [fill=black] (-5.319453589235699,-1.0030325797418178) circle (2.5pt);
\draw [fill=black] (-2,-1) circle (2.5pt);
\draw [fill=black] (3.9818236459258367,-1.0180588757598494) circle (2.5pt);
\draw [fill=black] (3.9818236459258367,-2.5180588757598494) circle (2.5pt);
\draw [fill=black] (2.162370056690138,-1.0210914555016672) circle (2.5pt);
\draw [fill=black] (5.481823645925836,-1.0180588757598494) circle (2.5pt);
\end{scriptsize}
\end{tikzpicture}
\end{figure}

Thus $v_t \not\in C_1(v_0,w)$. Similarly $v_t \not
\in C_1(v_0,w)$, hence $v_t \not\in V(C_1)$. 
Since $F_{\ell-1}$ is biconnected, there exists a cycle $C_2$ in $F_{\ell-1}$ containing $v_0$ and $v_t$.  
We assume that an orientation of $C_2$ is given from $v_0$ to $v_t$. 
Without loss of generality, we may assume that $w \not\in C_2^-(v_t,v_0)$. 
If $C_2^-(v_t,v_0)\cap V(C_1)=\emptyset$, then 
$$P_\ell[v_1,v_t],C_2^-[v_t,v_0],C_1^-[v_0,w],S_1^-[w,v_1]$$ 
is a cycle with chord $v_0v_1$, a contradiction (see Figure 2). 
Thus we may assume that $C_2^-(v_t,v_0)\cap V(C_1)\ne \emptyset$. 
Let $z$ be a vertex such that $z\in C_2^-(v_t,v_0) \cap V(C_1)$ and $C_2^-(v_t,z) \cap V(C_1) = \emptyset$. 
By assumption, $z\neq w$. 
If $z\in C_1(v_0,w)$, then 
$$P_\ell[v_1,v_t],C_2^-[v_t,z],C_1^-[z,v_0],C_1^-[v_0,w],S_1^-[w,v_1]$$ 
is a cycle with chord $v_0v_1$, a contradiction. 
Otherwise, $z\in C_1^-(v_0,w)$, and similarly
$$P_\ell[v_1,v_t],C_2^-[v_t,z],C_1[z,v_0],C_1[v_0,w],S_1^-[w,v_1]$$ 
is a cycle with chord $v_0v_1$, a contradiction. 
Thus $V(S_1) \cap V(F)=\emptyset$. 
Next suppose $j\in \{0,2\}$, that is, $v_j\in \{v_0,v_2\}$. 
If $j=0$, then $$P_\ell[v_1,v_t], C_2[v_t,v_0],S_1^-[v_0,v_1]$$ 
is a cycle with chord $v_0v_1$, a contradiction.  
If $j=2$, then similarly, we can find a cycle with chord $v_1v_2$, a contradiction.   

For induction, assume that Claim \ref{paths} is true for $i-1$. 
Thus there exists a path $S_{i-1}$ in $F_k$ from $v_{i-1}$ to $v_{j'}$ for some $i+1\leq j'\leq t$ 
satisfying the conditions of Claim \ref{paths}.  
Suppose that every path $S_i$ starting at $v_i, u_i, \ldots$ passes through some vertex 
$x \in V(F) \cup S_{i-1}(v_{i-1},v_{j'})$ 
before reaching any $v_j$ with $i\ne j$.  
Then select a vertex $x$ such that $S_i(v_i,x)\cap (V(F)\cup S_{i-1}(v_{i-1},v_{j'}))=\emptyset$.  
First suppose $x\in V(F)$. 
Since $F_{\ell-1}$ is connected, there exists a path $Q_1$ in $F_{\ell-1}$ from $x$ to $v_0$.   
Then 
$$P_\ell[v_0,v_{i-1}],S_{i-1}[v_{i-1},v_{j'}],P^-_\ell[v_{j'},v_i],S_i[v_i,x],Q_1[x,v_0]$$ 
is a cycle with chord $v_{i-1}v_i$, a contradiction. 
Next suppose $x\in S_{i-1}(v_{i-1},v_{j'})$. 
Since $F_{\ell-1}$ is connected, there exists a path $Q_2$ in $F_{\ell-1}$ from $v_t$ to $v_0$.  
Then 
$$P_\ell[v_0,v_{i-1}],S_{i-1}[v_{i-1},x],S^-_i[x,v_i],P_\ell[v_i,v_t],Q_2[v_t,v_0]$$ 
is a cycle with chord $v_{i-1}v_i$, a contradiction. 
Thus $S_i$ is a path from $v_i$ to $v_j$ not containing any vertex in  
$V(F)\cup S_{i-1}(v_{i-1},v_{j'})$.  
If $j\geq i+2$, then Claim \ref{paths} holds. 
Thus we may assume that $j\leq i+1$. 
Suppose $j\leq i-2$. 
Then 
$$P_\ell[v_j,v_{i-1}],S_{i-1}[v_{i-1},v_{j'}],P^-_\ell[v_{j'},v_i],S_i[v_i,v_j]$$ 
is a cycle with chord $v_{i-1}v_i$, a contradiction. 
If $j=i-1$, then using the above path $Q_2$,  
$$P_\ell[v_0,v_{i-1}],S_i^-[v_{i-1},v_i],P_\ell[v_i,v_t],Q_2[v_t,v_0]$$ 
is a cycle with chord $v_{i-1}v_i$, a contradiction.  
If $j=i+1$, then similarly, we can find a cycle with chord $v_iv_{i+1}$, a contradiction. 
Thus for each $1\leq i\leq t-2$, 
there exists a path $S_i$ in $F_k$ from $v_i$ to $v_j$ for some $i+2\leq j\leq t$ 
satisfying the conditions of Claim \ref{paths}.  
\end{proof}

%%% Proof of Prop. 3 %%%
By Claim \ref{paths}, there exists a path $S_{t-2}$ from $v_{t-2}$ to $v_t$ such that 
$S_{t-2}(v_{t-2},v_{t}) \cap V(P_{\ell})=\emptyset$ and $V(S_{t-2})\cap V(F)=\emptyset$.  
Since $\deg_{F_k}(v_{t-1})\geq 3$ by our assumption, 
there exists a neighbor $u_{t-1}$ of $v_{t-1}$ with $u_{t-1} \not\in \{v_{t-2}, v_t\}$. 
Since $F_k$ is biconnected, there exists a path $S_{t-1}$ in $F_k$ starting with 
$v_{t-1},u_{t-1},\ldots,$ terminating at $v_j$ with $j\ne t-1$ such that 
$S_{t-1}(v_{t-1},v_j)\cap V(P_{\ell})=\emptyset$. 
Since $F_{\ell-1}$ is biconnected, there exists a cycle $C_1$ containing $v_0$ and $v_t$.  
We assume that an orientation of $C_1$ is given from $v_0$ to $v_t$.  
Suppose that every path $S_{t-1}$ starting at $v_{t-1}$ passes through some vertex 
$x \in V(F) \cup S_{t-2}(v_{t-2},v_t)$ before reaching $v_j$ with $j\neq t-1$. 
Then we take a vertex $x$ such that 
$S_{t-1}(v_{t-1},x)\cap (V(F)\cup S_{t-2}(v_{t-2},v_{t}))=\emptyset$.  
First suppose $x\in V(F)$. 
Since $F_{\ell-1}$ is biconnected, two vertices $v_0$ and $x$ must lie on a common cycle 
$C_2$ in $F_{\ell-1}$. 
We assume that an orientation of $C_2$ is given from $v_0$ to $x$.  
Then we may assume that $v_t\not\in C_2(x,v_0)$. 
Thus 
$$P_\ell[v_0,v_{t-2}],S_{t-2}[v_{t-2},v_t],P^-_\ell[v_t,v_{t-1}],S_{t-1}[v_{t-1},x],C_2[x,v_0]$$ 
is a cycle with chord $v_{t-2}v_{t-1}$, a contradiction. 
Next suppose $x\in S_{t-2}(v_{t-2},v_{t})$. 
Then 
$$P_\ell[v_0,v_{t-2}],S_{t-2}[v_{t-2},x],S^-_{t-1}[x,v_{t-1}],P_\ell[v_{t-1},v_t], C_1[v_t,v_0]$$
is a cycle with chord $v_{t-2}v_{t-1}$, a contradiction. 
Thus $S_{t-1}$ is a path from $v_{t-1}$ to $v_j$ not containing any vertex in 
$V(F)\cup S_{t-2}(v_{t-2},v_{t})$. 
If $j\leq t-2$, then 
$$P_\ell[v_j,v_{t-2}],S_{t-2}[v_{t-2},v_t], P^-_\ell[v_t,v_{t-1}],S_{t-1}[v_{t-1},v_j]$$
is a cycle with chord $v_{t-2}v_{t-1}$, a contradiction. 
If $j=t$, then 
$$P_\ell[v_0,v_{t-1}],S_{t-1}[v_{t-1},v_t],C_1[v_t,v_0]$$ 
is a cycle with chord $v_{t-1}v_t$, a contradiction.  
Thus, for each $1\leq i\leq k$, there exists some vertex $v\in P_i(a_i, b_i)$ 
such that $\deg_{F_k}(v)=2$. 

Next consider $F_1=P_1\cup C$. 
We assume that an orientation of $C$ is given from $a_1$ to $b_1$. 
Then $C[a_1,b_1],P^-_1[b_1,a_1]$ is a cycle in $F_k$. 
By the above result, there exists some vertex $x\in C(b_1,a_1)$ with $\deg _{F_k}(x)=2$.  
Similarly, since $P_1[a_1,b_1],C[b_1,a_1]$ is a cycle in $F_k$, 
there exists some vertex $x'\in C(a_1,b_1)$ with $\deg _{F_k}(x')=2$. 
This completes the proof of Proposition \ref{stem1}. 
\end{proof}

%%% Prop. 4 %%%
\begin{prop}
\label{n/3stems}
Every non-chorded biconnected graph $H$ of order $n$ has at least $(n-2)/3 +2$ stem vertices. 
\end{prop}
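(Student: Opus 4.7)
The plan is to combine Propositions \ref{decomp} and \ref{stem1} with a simple edge-counting argument: Proposition \ref{stem1} gives one lower bound on the number $s$ of stems in terms of the number $k$ of paths in the decomposition, and an edge count gives a complementary lower bound. The small cases $n\leq 3$ are immediate, since the only non-chorded biconnected graph of order at most $3$ is the triangle, which has $s=3$.

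Assume $n\geq 4$. By Proposition \ref{triangle}, $H$ is triangle-free, so every cycle of $H$ has length at least $4$ and Proposition \ref{decomp} applies, yielding a decomposition with cycle $C=F_0$ and paths $P_1,\ldots,P_k$ for some $k\geq 0$. If $k=0$ then $H=C$ is a cycle, so every vertex is a stem and $s=n\geq (n-2)/3+2$. Assume $k\geq 1$. Proposition \ref{stem1} provides an interior stem in each $P_i$ together with two additional stems on $C$ disjoint from every $P_i$, which gives the first bound
\[
s\geq k+2.
\]

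Next I would count edges. Each $P_i$ contributes $|V(P_i)|-2$ new vertices and $|V(P_i)|-1$ new edges to $F_{i-1}$, so
\[
n=|V(C)|+\sum_{i=1}^{k}\bigl(|V(P_i)|-2\bigr),\qquad |E(H)|=|V(C)|+\sum_{i=1}^{k}\bigl(|V(P_i)|-1\bigr)=n+k.
\]
Since $H$ is biconnected of order at least $3$, every vertex has degree at least $2$. Letting $b$ denote the number of vertices of degree at least $3$, we have $s+b=n$ and
\[
2(n+k)=2|E(H)|=\sum_{v\in V(H)}\deg_H(v)\geq 2s+3b=2n+b,
\]
which forces $b\leq 2k$ and hence the second bound $s\geq n-2k$.

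Combining, $s\geq \max(k+2,\,n-2k)$. If $n\geq 3k+2$, then $s\geq n-2k\geq (n+4)/3=(n-2)/3+2$; otherwise $k\geq (n-1)/3$, so $s\geq k+2\geq (n-1)/3+2>(n-2)/3+2$. Either way the desired inequality holds. The only real obstacle is recognizing that Proposition \ref{stem1} alone is too weak when $n$ is much larger than $k$ while the edge count alone is too weak when $k$ is close to $n/3$; the two bounds are tight at opposite extremes and together yield the constant $(n-2)/3+2$.
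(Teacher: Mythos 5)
Your proof is correct and follows essentially the same route as the paper: combine the $s\geq k+2$ bound from Proposition \ref{stem1} with the bound $s\geq n-2k$, and take the maximum. The only cosmetic difference is that you derive $b\leq 2k$ by an edge count, whereas the paper observes directly that every branch vertex must be one of the $2k$ endpoints $a_i,b_i$; your explicit handling of the cases $n\leq 3$ and $k=0$ is a small bonus in care.
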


\begin{proof}
Let $C$ and $P_1, \ldots, P_k$ be a cycle and paths satisfying the conclusions of Proposition \ref{decomp}.  
Then by Proposition \ref{stem1},  there exist at least $k+2$ stem vertices in $H$. 
Also, since $H$ is biconnected, every vertex in $H$ is either a stem vertex or a branch vertex. 
Now consider the endpoints of $P_i$ for each $1\leq i\leq k$. 
By Proposition \ref{decomp}, there exist at most $2k$ branch vertices in $H$. 
Thus there exist at least $n-2k$ stem vertices in $H$.
Consequently, the number of stem vertices in $H$ is at least 
$\max\{k+2, n-2k\}$, which is always at least $(n-2)/3+2$.
\end{proof}

\noindent
{\bf Definition.}
A \emph{biconnected component} in a graph is a maximal biconnected subgraph.  
In this paper, we do not consider a single edge to be a biconnected component, 
and we handle these edges separately.
Every cycle in a graph is contained in exactly one biconnected component.  
The following intuitive proposition is shown in \cite{HarPri}.

%%% Prop. 5 %%%
\begin{prop}
[\rm Harary, Prins \cite{HarPri}] 
\label{equivclass}
If $B_1,B_2$ are distinct biconnected components in a graph, 
then $E(B_1) \cap E(B_2) = \emptyset$.
\end{prop}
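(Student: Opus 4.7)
\medskip

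\noindent\textbf{Proof plan.} The plan is to argue by contradiction. Suppose that $B_1$ and $B_2$ are distinct biconnected components sharing an edge $e = uv$. I will show that $H := B_1 \cup B_2$ is itself a biconnected subgraph, which strictly contains $B_1$ (since $B_2 \not\subseteq B_1$) and therefore contradicts the maximality in the definition of a biconnected component.

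Connectivity of $H$ is immediate: each $B_i$ is connected, and both contain the endpoints $u,v$ of $e$, so $H$ is connected. The real content is to verify that $H$ has no cut vertex. I would assume toward a contradiction that some $w \in V(H)$ is a cut vertex of $H$, and then produce a path between arbitrary vertices of $H - w$. Since $e = uv$ is an edge and $w$ is a single vertex, at least one of $u,v$ is different from $w$; without loss of generality take $v \ne w$. By the paper's convention, no biconnected component is a single edge, so each $B_i$ has order at least three, and hence, being biconnected, has no cut vertex of its own; consequently $B_i - w$ is connected whenever $w \in V(B_i)$ (and of course $B_i \subseteq H - w$ when $w \notin V(B_i)$).

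A short case split on whether $w$ lies in $V(B_1) \cap V(B_2)$, in exactly one of $V(B_1),V(B_2)$, or in neither, then finishes the argument: in every case, both of the sets ``$B_1$ with $w$ removed'' and ``$B_2$ with $w$ removed'' are connected and both contain $v$, so their union $H - w$ is connected. This contradicts the assumption that $w$ is a cut vertex of $H$, so $H$ is biconnected, contradicting maximality of $B_1$. Hence $E(B_1) \cap E(B_2) = \emptyset$.

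\medskip

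\noindent\textbf{Main obstacle.} There is no computational difficulty; the only thing to handle with care is the case analysis on the location of the hypothetical cut vertex $w$, particularly when $w \in \{u,v\}$, and the reliance on the explicit stipulation that a single edge is not considered a biconnected component. Without that convention, a lone shared edge could masquerade as a biconnected component and the maximality step would need to be phrased more carefully; with it, the case analysis is routine.
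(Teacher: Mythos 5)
Your proof is correct, but note that the paper does not actually prove this proposition: it is quoted from Harary and Prins (``The following intuitive proposition is shown in \cite{HarPri}''), so there is no in-paper argument to compare against. Your argument is the standard one and is sound: if $B_1,B_2$ share the edge $uv$, then $H=B_1\cup B_2$ is connected, and for any would-be cut vertex $w$ you pick $v\in\{u,v\}$ with $v\ne w$, observe that each $B_i-w$ is connected (a non-separable graph on at least three vertices is $2$-connected) and contains $v$, so $H-w$ is connected; hence $H$ is biconnected. The one point worth making explicit is the final maximality step: you need $H\supsetneq B_1$, which holds because two \emph{distinct maximal} biconnected subgraphs cannot be nested (if $B_2\subseteq B_1$ then maximality of $B_2$ forces $B_2=B_1$). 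Your remark about the single-edge convention is slightly more cautious than necessary --- even without it, a shared edge that \emph{is} one of the components would be contained in the other, again contradicting maximality --- but invoking the paper's stipulation is a clean way to guarantee $|B_i|\ge 3$ so that $B_i-w$ is connected, and it matches the paper's setup.
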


%%% Prop. 6 %%%
\begin{prop}
\label{graphdecomp}
Let $k\geq 1$ be an integer, and 
let $H$ be a non-chorded connected graph containing $k$ biconnected components. 
Then $E(H)$ can be decomposed into
\begin{itemize}
\item a sequence of non-chorded biconnected components $B_1, \ldots, B_k$, and
\item a sequence of edge-disjoint paths $P_2, \ldots, P_\ell$ 
$($some of which might be just a single vertex$)$
with $\ell \geq k$, where the endpoints of $P_i$ are $a_i, b_i$ for each $2\leq i\leq \ell$,
\end{itemize}
so that there exists a sequence of induced subgraphs $F_1, F_2, \ldots, F_\ell$ of $H$ with
the following properties: 

\vspace{0.2cm}

\noindent
$($i$)$ $F_1 = B_1$, 

\vspace{0.2cm}

\noindent
$($ii$)$ for each $2\leq i\leq k$, $F_i = F_{i-1} \cup P_{i} \cup B_{i}$, 
$V(P_i) \cap V(F_{i-1}) = \{a_i\}$, $V(P_i) \cap V(B_i) = \{b_i\}$, 
and $V(F_{i-1}) \cap V(B_i) = \emptyset$ unless $a_i=b_i$, in which case
$V(F_{i-1}) \cap V(B_i) = \{a_i\}$, 

\vspace{0.2cm}

\noindent
$($iii$)$ for each $k+1\leq i\leq \ell$, $F_i = F_{i-1} \cup P_{i}$, 
$V(P_i) \cap V(F_{i-1}) = \{a_i\}$, $\deg_H(b_i)=1$, $|P_i| \geq 2$, and  

\vspace{0.2cm}

\noindent
$($iv$)$ $F_\ell = H$.
\end{prop}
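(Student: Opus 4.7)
The plan is to exploit the block-cut tree of $H$. Since $H$ is connected and non-chorded, every edge of $H$ either lies in a unique biconnected component (by Proposition \ref{equivclass}) or is a bridge, and any two distinct blocks share at most one vertex, which is then a cut vertex. These blocks (both biconnected components and bridges) together with the cut vertices organize into a tree in the standard way. The decomposition in the statement is essentially a traversal of this tree.

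First I would pick any biconnected component as $B_1$ and set $F_1 = B_1$. Then, using a BFS/DFS of the block-cut tree rooted at $B_1$ restricted to biconnected-component nodes, I would order the remaining biconnected components $B_2, \ldots, B_k$ so that for each $i \geq 2$, every biconnected component on the tree-path from $B_i$ back to $B_1$ with index less than $i$ has already been processed. The tree-path from $B_i$ to the nearest earlier $B_j$ passes only through bridges and cut vertices; this corresponds in $H$ to a path $P_i$ whose edges are all bridges, whose endpoint $a_i$ is a cut vertex lying in $F_{i-1}$, and whose endpoint $b_i$ is a cut vertex of $B_i$. In the degenerate case where $B_i$ and an earlier $B_j$ share a cut vertex directly, $P_i$ is the single vertex $a_i = b_i$. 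Setting $F_i = F_{i-1} \cup P_i \cup B_i$ gives (ii), because blocks share at most one vertex, bridges are edge-disjoint from biconnected components by Proposition \ref{equivclass}, and the tree structure prevents any further incidence between $V(P_i)$, $V(B_i)$, and $V(F_{i-1})$ beyond the stated endpoints.

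After step $k$, the subgraph $F_k$ contains all $k$ biconnected components of $H$ together with every bridge lying on a tree-path between two biconnected components. The remaining edges of $H$ are bridges forming a forest whose tree components hang off $F_k$ at single cut vertices. For each such tree component I would repeatedly strip off a maximal bridge-path ending at a leaf of $H$: pick any leaf, follow bridges back to the first branch vertex or back to $F_k$, and call this path $P_i$ with $b_i$ the leaf (so $\deg_H(b_i) = 1$) and $a_i$ the vertex where it re-attaches. Each such $P_i$ has $|P_i| \geq 2$, edge-disjointness from previous $P_j$ is automatic since bridges are distinct, and after finitely many iterations we reach $F_\ell = H$, giving (iii) and (iv).

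The main obstacle is verifying the intersection conditions in (ii) and (iii), which amount to showing that when $P_i \cup B_i$ is attached, no edge of $H$ between its new vertices and $F_{i-1}$ has been missed and that $F_i$ is indeed induced. This reduces to two facts for non-chorded connected graphs: any two blocks meet in at most one vertex, and the bridges form the tree skeleton connecting them. A secondary bookkeeping point is choosing the pendant paths so that each terminates at a degree-$1$ vertex of $H$ and has length at least one; this is handled by always rooting pendant paths at leaves and peeling inward.
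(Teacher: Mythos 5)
Your proposal is correct and follows essentially the same route as the paper: grow $F_1=B_1$ by attaching the remaining biconnected components one at a time along connecting paths, then absorb the leftover forest via pendant paths; the paper simply proves the key fact that a new block meets $F_{i-1}$ in at most one vertex directly from Proposition~\ref{equivclass} rather than invoking the block-cut tree wholesale. One small point to tighten in part (iii): if you stop each pendant path at ``the first branch vertex,'' that vertex need not yet lie in $F_{i-1}$, so you must either add the peeled paths in the reverse of the peeling order or, as the paper does, take $P_i$ to be the entire path from a leaf of $H$ to its first vertex in the current $F_{i-1}$.
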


%\begin{enumerate}
%\renewcommand{\labelenumi}{(\arabic{enumi})}
%\item \label{decomp1} $F_1 = B_1$, 
%\item \label{decomp2} for each $2\leq i\leq k$, $F_i = F_{i-1} \cup P_{i} \cup B_{i}$, 
%$V(P_i) \cap V(F_{i-1}) = \{a_i\}$, $V(P_i) \cap V(B_i) = \{b_i\}$, 
%and $V(F_{i-1}) \cap V(B_i) = \emptyset$ unless $a_i=b_i$, in which case
%$V(F_{i-1}) \cap V(B_i) = \{a_i\}$, 
%\item \label{decomp6} for each $k+1\leq i\leq \ell$, $F_i = F_{i-1} \cup P_{i}$, 
%$V(P_i) \cap V(F_{i-1}) = \{a_i\}$, $\deg_H(b_i)=1$, $|P_i| \geq 2$, and  
%\item \label{decomp10} $F_\ell = H$.
%\end{enumerate}

\begin{proof}
Since $H$ is non-chorded, every biconnected component in $H$ must be non-chorded. 
Choose any biconnected component in $H$ to be $F_1=B_1$ (satisfying (\emph{i})).
%(satisfying (\emph{\ref{decomp1}})). 
We claim that $|V(B) \cap V(F_{i-1})| \leq 1$ for any biconnected component $B$ 
in $H \setminus E(F_{i-1})$ and for each $2\leq i\leq k$. 
For some $2\leq i\leq k$, suppose that there exists a biconnected component $B$ 
in $H \setminus E(F_{i-1})$ with $|V(B) \cap V(F_{i-1})| \geq 2$. 
Then for some $u,v \in V(B) \cap V(F_{i-1})$, there exists a path $Q_1$ 
from $u$ to $v$ in $F_{i-1}$ and a path $Q_2$ from $u$ to $v$ in $B$ such that 
$Q_1 \cup Q_2$ forms a cycle $Q$. 
This cycle $Q$ is in $H$. 
Thus $Q$ is contained in some biconnected component $B'$. 
Since $Q_1$ is in $F_{i-1}$, it is edge-disjoint from $B$, 
$Q$ is not in $B$ and $B' \neq B$. 
But $B$ and $B'$ share some edge of $Q_2$, contradicting Proposition \ref{equivclass}. 
Thus the claim holds. 

First suppose that there exists a biconnected component $B$ in $H \setminus E(F_{i-1})$ with 
$V(B) \cap V(F_{i-1}) = \{v\}$ for some vertex $v$. 
In this case, let $B_i=B$, $P_i=v$, 
and $F_{i} = F_{i-1} \cup P_i \cup B_i$, with $a_i=b_i=v$.
Next suppose that all biconnected components in $H \setminus E(F_{i-1})$ are 
vertex-disjoint from $F_{i-1}$. 
Let $B_i$ be a biconnected component in $H \setminus E(F_{i-1})$ such that a path 
from $B_i$ to $F_{i-1}$ in $H$ is edge-disjoint from every other biconnected component in 
$H \setminus E(F_{i-1})$, and let this path be $P_i$. 
Since $H$ is connected, such a $B_i, P_i$ exist. 
Let $F_{i} = F_{i-1} \cup P_i \cup B_i$, $V(P_i) \cap V(F_{i-1})=\{a_i\}$, and 
$V(P_i) \cap V(B_i)=\{b_i\}$. 
Thus (\emph{ii}) is satisfied. 
%Thus (\emph{\ref{decomp2}}) is satisfied. 

Clearly $F_k$ is a connected graph containing all the cycles in $H$, 
and $H \setminus E(F_k)$ is a forest. 
Then there exists no path $P$ in $H \setminus E(F_k)$ with both endpoints in $V(F_k)$, 
otherwise $F_k \cup P$ would contain a cycle not in $F_k$. 
If $E(H) \setminus E(F_{i-1})\ne \emptyset$, then do the following:
Select some edge $e \in E(H) \setminus E(F_{i-1})$ that is incident to a leaf vertex $v$ in $H$. 
Let $P_i$ be a path from $v=b_i$ to $V(F_{i-1})$ with $V(P_i) \cap V(F_{i-1})=\{a_i\}$. 
Let $F_i = F_{i-1} \cup P_i$. 
Since $P_i$ contains edge $e$, $|P_i| \geq 2$, 
and since $v=b_i$ is a leaf in $H$, $\deg_H(b_i)=1$, satisfying (\emph{iii}). 
%(\emph{\ref{decomp6}}).

Since $H$ is finite, there exists some $\ell \geq k$ for which 
$E(H) \setminus E(F_{\ell}) = \emptyset$, satisfying (\emph{iv}). 
%(\emph{\ref{decomp10}}).
\end{proof}

Now we finally prove Lemma \ref{indep}. 

\vspace{0.25cm}

%%% Proof of Lemma 2 %%%
\noindent 
{\bf Proof of Lemma \ref{indep}.} 
If $H$ is acyclic, then applying Lemma \ref{tree} $(ii)$ to each connected component of $H$ gives the result. 
Thus we may assume that $H$ has at least one cycle. 
Hence $H$ contains a biconnected component. 
Let $B_1, \ldots, B_k$ and $P_2, \ldots, P_\ell$ be a decomposition of 
$E(H)$ into biconnected components and paths as described by the conclusion of 
Proposition \ref{graphdecomp} with the corresponding subgraphs $F_1, \ldots, F_\ell$ in $H$.
For each $B_i$, $1\leq i\leq k$, 
let $L_i = \lbrace v\in V(B_i) : \deg_{B_i}(v) \leq 2\rbrace$.  
By Proposition \ref{n/3stems}, each $L_i$ has order at least $(|B_i|-2)/3+2$.
Let $S = \lbrace v\in V(H) : \deg_{H}(v) \leq 2\rbrace$.
We will show that $|S| \geq |H|/6$. 
First, let $S_i = \lbrace v\in V(F_i) : \deg_{F_i}(v) \leq 2\rbrace$ for each $1\leq i\leq k$, and 
we claim the following. 
\setcounter{claim}{0}
\begin{claim}
For each $1\leq i\leq k$, $|S_i|\geq |F_i|/5+2$. 
\label{L_i}
\end{claim}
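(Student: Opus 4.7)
The plan is to prove Claim \ref{L_i} by induction on $i$, using only the decomposition supplied by Proposition \ref{graphdecomp}, the stem bound from Proposition \ref{n/3stems}, and the triangle-freeness guaranteed by Proposition \ref{triangle}.

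For the base case $i=1$, we have $F_1 = B_1$ and $S_1 = L_1$, so Proposition \ref{n/3stems} gives $|L_1| \geq (|B_1|-2)/3 + 2$. An elementary check shows that $(|B_1|-2)/3 + 2 \geq |B_1|/5 + 2$ as soon as $|B_1| \geq 5$. For the residual case $|B_1| = 4$, Proposition \ref{triangle} forces $B_1$ to be a $4$-cycle, so all four vertices are stems, giving $|S_1| = 4 \geq 14/5$.

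For the inductive step, I would split on whether the attachment path $P_i$ degenerates (the case $a_i = b_i$) or is an honest path with $p := |V(P_i)| \geq 2$. In both subcases, the key observation is that every interior vertex of $P_i$ (contributing $p-2$ new stems), every stem of $F_{i-1}$ other than $a_i$, and every vertex of $L_i$ other than $b_i$ remains a stem of $F_i$, because only $a_i$ and $b_i$ can see their degrees rise on passing from $F_{i-1}$ and $B_i$ to $F_i$. One easily checks that $\deg_{F_i}(b_i) \geq 3$ (and, when $a_i = b_i$, also $\deg_{F_i}(a_i) \geq 3$, since $v = a_i = b_i$ inherits at least one edge from each of $F_{i-1}$ and $B_i$). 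Consequently $|S_i| \geq |S_{i-1}| + |L_i| - 2$ in the degenerate case and $|S_i| \geq |S_{i-1}| + |L_i| + p - 4$ in the generic case. Combining these with the inductive hypothesis and Proposition \ref{n/3stems}, the target inequality $|S_i| \geq |F_i|/5 + 2$ reduces to $2|B_i| \geq 7$ in the degenerate case (immediate from $|B_i| \geq 4$) and to $|B_i| + 6p \geq 17$ in the generic case.

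The inequality $|B_i| + 6p \geq 17$ holds whenever $|B_i| \geq 5$ (since $p \geq 2$), so the sole obstruction is the corner case $|B_i| = 4$, $p = 2$, where the bound from Proposition \ref{n/3stems} is only $8/3$ and is too weak. This small case is precisely where I expect the main bookkeeping difficulty, but Proposition \ref{triangle} again rescues us: a non-chorded biconnected graph on four vertices must be a $4$-cycle, so $|L_i| = 4$, and substituting this exact value into the generic bound makes the required inequality immediate. Outside this corner the induction is mechanical arithmetic.
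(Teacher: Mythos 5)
Your argument follows the paper's proof almost line for line: the same induction on $i$, the same case split on whether $P_i$ degenerates to a single vertex, the same exclusion of only $a_i$ and $b_i$ from the stem counts, and the same arithmetic reductions (your conditions $2|B_i|\geq 7$ and $|B_i|+6p\geq 17$ are exactly the paper's $(2|B_i|+23)/15\geq 2$ and $(2|B_i|+12|P_i|-4)/15\geq 2$). There is, however, one concrete gap: you repeatedly rule out $|B_i|=3$ by appealing to Proposition \ref{triangle}, but that proposition only applies to non-chorded biconnected graphs of order \emph{at least four}. A triangle is itself a non-chorded biconnected graph, and since the paper's convention only excludes single edges from being biconnected components, $B_i$ can perfectly well be a $3$-cycle. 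The paper treats this case explicitly in the base case (``$B_1$ is a 3-cycle or a 4-cycle'') and in both inductive cases (``$B_i$ is a 3-cycle and $|L_i|=3$'').

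Concretely, three spots fail as written. In the base case you never address $|B_1|=3$ (though $|S_1|=3\geq 3/5+2$ does hold). In the degenerate step, ``$2|B_i|\geq 7$, immediate from $|B_i|\geq 4$'' is not immediate: for a triangle, $2|B_i|=6<7$. In the generic step, $(|B_i|,p)=(3,2)$ gives $|B_i|+6p=15<17$, so $(4,2)$ is not the sole obstruction. All three are repaired by the same device you already use for the $4$-cycle corner: substitute the exact value $|L_i|=3$ instead of the bound $(|B_i|-2)/3+2=7/3$. For instance, in the degenerate case with $|B_i|=3$ one gets $|S_i|\geq |S_{i-1}|+1\geq |F_{i-1}|/5+3\geq |F_i|/5+2$ since $|F_i|=|F_{i-1}|+2$, and the generic case with $p=2$ is similar. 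So the gap is minor and fillable, but the invocation of Proposition \ref{triangle} to exclude triangle components is genuinely wrong and must be replaced by an explicit $3$-cycle case.
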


\begin{proof}
First suppose $i=1$. 
Then recall $F_1 = B_1$.  
If $|B_1|\geq 5$, then $|S_1| = |L_1| \geq (|B_1|-2)/3+2 \geq |F_1|/5+2$.
If $|B_1| \leq 4$, then $B_1$ is a 3-cycle or a 4-cycle, 
since these are the only biconnected components on at most 4 vertices. 
Then clearly $|S_1| \geq |F_1|/5+2$. 

Next suppose $2\leq i\leq k$. 
Then recall $F_i = F_{i-1} \cup P_i \cup B_i$,
and assume by inductive assumption that $F_{i-1}$ contains a set $S_{i-1}$ of vertices
of degree at most 2, where $|S_{i-1} | \geq |F_{i-1}|/5+2$.
We have the following two cases. 

\vspace{0.3cm} 

%%% Case 1 %%%
\noindent
\textbf{Case 1.} For some $2\leq i\leq k$, $|P_i| =1$.  

Then $a_i=b_i$. 
By Proposition \ref{graphdecomp}\,$(ii)$
%(\ref{decomp2}), 
$V(F_{i-1}) \cap V(B_i) = \{a_i\}$. 
Thus $|F_i| = |F_{i-1}| + |B_i| -1$. 
While $a_i$ may have degree 2 in each of $F_{i-1}$, $B_i$ separately, 
it has degree greater than 2 in $F_i$. 
Thus 
\begin{align}
|S_i| &\geq (|S_{i-1}|-|\{a_i\}|)+(|L_i|-|\{a_i\}|) \nonumber \\
&\geq \left(\frac{|F_{i-1}|}{5}+2 \right)+\left(\frac{|B_i|-2}{3}+2\right)-2 \nonumber \\
&= \frac{|F_{i-1}| + |B_i| -1}{5}+\frac{2|B_i|+23}{15} \nonumber \\
&= \frac{|F_i|}{5}+\frac{2|B_i|+23}{15}.  
\label{case1}
\end{align}
If $|B_i|\geq 4$, then, by (\ref{case1}), we have $|S_i| \geq |F_{i}|/5+2$. 
Thus we may assume that $|B_i|\leq 3$. 
Then $B_i$ is a 3-cycle and $|L_i|=3$, in which case the inequality is easily shown. 

\vspace{0.3cm} 

%%% Case 2 %%%
\noindent
\textbf{Case 2.} For some $2\leq i\leq k$, $|P_i| \geq 2$.  

Then $a_i \neq b_i$. 
By Proposition \ref{graphdecomp}\,$(ii)$

%(\ref{decomp2}), 
$V(F_{i-1}) \cap V(B_i) = \emptyset$. 
Thus 
\begin{align*}
|F_i| &= |F_{i-1}| + |B_i| + |P_i|-|\{a_i, b_i\}| \\
&= |F_{i-1}| + |B_i| + |P_i|-2.
\end{align*}
Note that $\deg_{P_i}(v)\leq 2$ for each $1\leq i\leq \ell$ and every vertex $v\in V(P_i)$.  
While $a_i, b_i$ may have degree 2 in each of $F_{i-1}$, $B_i$  
or $P_i$ separately, they have degree greater than 2 in $F_i$. 
Thus 
\begin{align}
|S_i| &\geq (|S_{i-1}|-|\{a_i\}|) + (|L_i|-|\{b_i\}|) + (|P_i| - |\{a_i, b_i\}|) \nonumber \\ 
&\geq \left(\frac{|F_{i-1}|}{5}+2 \right) + \left(\frac{|B_i|-2}{3}+2\right) + |P_i| - 4 \nonumber \\
&= \frac{|F_{i-1}|+|B_i|+|P_i|-2}{5}+\frac{2|B_i|+12|P_i|-4}{15} \nonumber \\
&= \frac{|F_{i}|}{5}+\frac{2|B_i|+12|P_i|-4}{15}. 
\label{case2}
\end{align}
Note that $|P_i| \geq 2$. 
If $|B_i|\geq 5$, then, by (\ref{case2}), we have $|S_i| \geq |F_{i}|/5+2$. 
Thus we may assume that $|B_i|\leq 4$. 
Then $B_i$ is a 3-cycle or a 4-cycle, and $|L_i|=3$ or 4. 
In either case, the inequality is again easily shown.
\end{proof}

In particular, Claim~\ref{L_i} shows 
\begin{align}
|S_k|\geq|F_k|/5+2.
\label{S_k F_k/5+2}
\end{align}

Let $t = |S_k \cap \bigcup_{i=k+1}^\ell a_i|$. 
Enumerate the components $T_1, T_2, \ldots, T_w$ of $\bigcup_{i=k+1}^\ell \langle V(P_i)\rangle$, 
and note that $w\geq t$.
Clearly 
\begin{align}
t \leq |S_k|. 
\label{T S_i}
\end{align}
%and 
%\begin{align}
%t \leq w \leq |H| - |F_k|.
%\label{T H-f_k}
%\end{align}

\begin{claim}
We have $|S| \geq |H|/6$. 
\label{L}
\end{claim}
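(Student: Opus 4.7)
The plan is to compute $|S|$ essentially exactly by classifying vertices, then close the bound via Claim \ref{L_i} together with an elementary structural inequality and a brief case analysis.

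First, I would establish the exact identity. Since $F_k$ is assembled from non-chorded biconnected components of order at least $3$ and connecting paths, every $v \in V(F_k)$ satisfies $\deg_{F_k}(v) \geq 2$, so $S_k$ is precisely the set of degree-$2$ vertices of $F_k$. A vertex of $S_k$ is lost from $S$ exactly when it is also an attachment point $a_i$ for some later path $P_i$ with $i > k$ (the extra edge forces $\deg_H \geq 3$). Thus $|S \cap V(F_k)| = |S_k| - t$. Outside $V(F_k)$, each vertex belongs to exactly one path $P_{j_0}$ (with $j_0 > k$) as interior or leaf, and lies in $S$ iff it is not itself an attachment point for some later path; writing $A''$ for the set of attachment points lying outside $V(F_k)$, this gives $|S \cap (V(H) \setminus V(F_k))| = (|H| - |F_k|) - |A''|$, and hence
\[
|S| = |S_k| - t + (|H| - |F_k|) - |A''|.
\]

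Next, I would record the key structural bound $|A''| \leq (|H| - |F_k| - |A|)/2$, where $A$ is the set of attachment points in $V(F_k)$. This follows because each $v \in A''$ is an internal vertex of a unique earlier path, combined with $|A| + |A''| \leq \ell - k$ (at most one attachment per path) and $\ell - k \leq |H| - |F_k|$ (each added path contributes at least one new vertex). Alongside the elementary bound $|A| \leq \min(|F_k|, |H| - |F_k|)$ and Claim \ref{L_i}'s $|S_k| \geq |F_k|/5 + 2$, this supplies all the arithmetic we need.

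A short case split finishes the proof. When $|A| \geq |S_k|$, the worst case $t = |S_k|$ combines with $|A''| \leq (|H| - |F_k| - |S_k|)/2$ to give $|S| \geq (|H| - |F_k|)/2 + |S_k|/2 \geq |H|/2 - 2|F_k|/5 + 1 \geq |H|/6$, where the final inequality holds because this case forces $|F_k| \leq 5|H|/6 + 5/2$ (otherwise $|A| \leq |H| - |F_k| < |S_k|$, contradicting $|A| \geq |S_k|$). When $|A| < |S_k|$, the loss satisfies $t \leq |A|$ and the same substitution yields $|S| \geq |S_k| - |A|/2 + (|H| - |F_k|)/2$, which reduces either to $|S| \geq |H|/2 - 2|F_k|/5 + 1 \geq |H|/6$ (for $|F_k| \leq 5|H|/6 + 5/2$) or to $|S| \geq |S_k| \geq |F_k|/5 + 2 \geq |H|/6$ (for larger $|F_k|$, in which case $|A| \leq |H| - |F_k|$ is small and the inequality $|F_k|/5 + 2 \geq |H|/6$ is immediate). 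The main obstacle is the bookkeeping of attachment points across these sub-cases and verifying that $w \geq t$ forces each ``lost'' attachment in $S_k$ to correspond to a distinct $T_j$; once the structural inequality on $|A''|$ is in place, the arithmetic is routine.
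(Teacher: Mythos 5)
Your argument is correct and it lands on exactly the same pair of bounds as the paper, namely $|S| \geq |H|/2 - 2|F_k|/5 + 1$ and $|S| \geq |S_k| \geq |F_k|/5 + 2$, whose maximum exceeds $|H|/6$ with the crossover near $|F_k| \approx 5|H|/6$. The one genuine difference is how the low-degree vertices outside $F_k$ are counted: the paper applies Lemma~\ref{tree}$(i)$ to each tree component of $\bigcup_{i>k}\langle V(P_i)\rangle$ to obtain $|S\cap (H-F_k)| \geq (|H|-|F_k|+t)/2$, whereas you count attachment points directly via the identity $|S\cap(V(H)\setminus V(F_k))| = (|H|-|F_k|) - |A''|$ together with the structural bound $|A''| \leq (|H|-|F_k|-|A|)/2$ (valid because each later path contributes a distinct degree-one leaf $b_i$, so $|H|-|F_k| \geq (\ell-k)+|A''| \geq |A|+2|A''|$). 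The two counts are equivalent in strength here, but yours is slightly more elementary in that it bypasses the tree lemma for this step. One stylistic remark: your case split on $|A|$ versus $|S_k|$ is unnecessary, since both target inequalities hold unconditionally --- from $t \leq |A|$ and $(|H|-|F_k|)-|A''| \geq |A|+|A''| \geq t$ you get $|S| \geq |S_k| \geq |F_k|/5+2$ outright, and from $t \leq \min\{|S_k|,|A|\}$ you get $|S| \geq (|H|-|F_k|+|S_k|)/2 \geq |H|/2 - 2|F_k|/5+1$ in either case --- so you could simply take the maximum of the two bounds at the end, as the paper does, and shorten the bookkeeping considerably.
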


\begin{proof}
Each component $T_i$, $1\leq i\leq w$, is a tree, so by Lemma~\ref{tree} $(i)$, it has at least $|T_i|/2+1$ 
vertices of degree at most 2.  
Each component contains exactly one vertex $v\in V(F_k)$, while the rest are in $H - F_k$, 
and this one vertex $v$ may have degree at least 2 in $F_k$, 
so the number of vertices of degree at most 2 in $H - F_k$ is
\begin{align*}
|S \cap (H - F_k)| \geq \sum_{i=1}^w \frac{|T_i|}{2} = 
\sum_{i=1}^w \left(\frac{|T_i|-1}{2} +\frac{1}{2}\right) = \frac{|H| - |F_k|}{2} +\frac{w}{2} \geq \frac{|H| - |F_k|+ t}{2}.
\end{align*}
Also $|S \cap F_k| = |S_k| - t$. 
Then
\begin{align}
|S| = |S \cap F_k| + |S \cap (H - F_k)| \geq |S_k| - t+\frac{|H| - |F_k|+ t}{2}= \frac{|H| - |F_k| - t}{2} + |S_k|. 
\label{|S|}
\end{align}
Combining (\ref{S_k F_k/5+2}), (\ref{T S_i}) and (\ref{|S|}) gives
\begin{align*}
|S| \geq \frac{|H| - |F_k| + |S_k|}{2}\geq \frac{|H|}{2} - \frac{2|F_k|}{5}+1. 
\end{align*}
%while combining (\ref{T H-f_k}) and (\ref{|S|}) gives
Since $|S| \geq |S_k|$, by (\ref{S_k F_k/5+2}),  
\begin{align*}
|S| \geq \frac{|F_k|}{5}+2.
\end{align*}
Thus $|S| \geq \max\big\lbrace|H|/2 - 2|F_k|/5+1, |F_k|/5+2\big\rbrace$, which is at least $|H|/6$ for all values of $|F_k|$. 
\end{proof}

We claim that $\langle S\rangle$ is a forest or $H$ is a cycle.  
Suppose $\langle S\rangle$ is not a forest.  
Then $\langle S\rangle$ contains a cycle $C$. 
If $H=C$, then the claim holds. 
Thus $H\ne C$, that is, $V(H)\setminus V(C)\ne \emptyset$. 
Note that $\deg_H(v) \leq 2$ for each $v \in S$. 
Since $H$ is connected by the assumption, we get a contradiction. 
Thus the claim holds. 
If $\langle S\rangle$ is a forest, then it is bipartite.  
Since $|S| \geq |H|/6$ by Claim \ref{L}, 
there exists an independent subset $I \subseteq S$ of order at least $(|H|/6)/2 = n/12$. 
%If $\langle S\rangle$ is a forest, then Fact~\ref{tree ind} guarantees an independent subset 
%$I\subseteq S$ of order at least $(|H|/6)/2 = n/12$. 
If $H$ is a cycle, then clearly Lemma \ref{indep} also is true.  
This completes the proof of Lemma \ref{indep}. 
\qed

%%% Chapter 3 %%%
\section{Other Lemmas}

\hspace*{0.5cm} 
In this section, we state several known lemmas that will be used in the proof of our main result.  
Note that a {\it minimal} set of $r$ vertex-disjoint cycles $C_1,\ldots, C_r$ is a set with 
$|\bigcup_{i=1}^r C_i|$ as small as possible.

%%% Lem 3 %%%
\begin{lem}[\cite{GHK2}] 
Let $r\geq 1$ be an integer, and let  
$\mathscr{C} = \{C_1, \ldots, C_r\}$ be a minimal set of $r$ 
vertex-disjoint chorded cycles in a graph $G$. 
If $|C_i|\geq 7$ for some $1\leq i\leq r$, then $C_i$ has at most two chords. 
Furthermore, if the $C_i$ has two chords, then these chords must be crossing.
\label{max two chords} 
\end{lem}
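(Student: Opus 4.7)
The strategy is a standard exchange argument. Suppose the Lemma fails, so $C_i$ has either two chords that do not cross (case A) or at least three chords (case B). In either case I will produce a chorded cycle $C'$ with $V(C')\subseteq V(C_i)$ and $|C'|<|C_i|$. Replacing $C_i$ by $C'$ in $\mathscr{C}$ then yields $r$ vertex-disjoint chorded cycles using strictly fewer total vertices, contradicting the minimality of $\mathscr{C}$. The hypothesis $|C_i|\ge 7$ will enter only in case B.

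For case A, let $e_1=uv$ and $e_2=xy$ be two non-crossing chords of $C_i$. The non-crossing condition means $x$ and $y$ both lie on one of the two $u,v$-arcs of $C_i$, call it $A$. Because $e_1$ is a chord, the complementary arc has length at least $2$, so $|A|\le |C_i|-2$. The cycle $C':=A\cup\{e_1\}$ has length $|A|+1\le |C_i|-1$, and $e_2$ is a chord of $C'$: its endpoints are non-consecutive on $A$ (otherwise $e_2\in E(C_i)$) and $\{x,y\}\ne\{u,v\}$.

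For case B, by case A I may assume that every pair of chords of $C_i$ crosses, and then pick any three such chords $e_1,e_2,e_3$. Pairwise crossing forces their six endpoints to be distinct and to appear in cyclic order on $C_i$ as $v_{a_1},\dots,v_{a_6}$ with $e_j=v_{a_j}v_{a_{j+3}}$ for $j=1,2,3$. Let $\ell_j$ denote the length of the $v_{a_j}v_{a_{j+1}}$-arc of $C_i$ (indices mod $6$), so $\ell_1+\cdots+\ell_6=|C_i|\ge 7$. By pigeonhole one of the sums $\ell_1+\ell_4$, $\ell_2+\ell_5$, $\ell_3+\ell_6$ is at least $3$; say $\ell_1+\ell_4\ge 3$. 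I then take $C'$ to be the cycle consisting of the $v_{a_2}v_{a_4}$-arc through $v_{a_3}$, the chord $e_1$, the $v_{a_5}v_{a_1}$-arc through $v_{a_6}$, and the chord $e_2$. This cycle has length $|C_i|-\ell_1-\ell_4+2\le |C_i|-1$, and $e_3=v_{a_3}v_{a_6}$ is a chord of it, since its endpoints lie in the two chosen arcs. If $C_i$ has four or more chords, I simply pick any three and run the same argument.

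The main subtlety lies in case B: one must choose the pair of chords to use (and the corresponding two arcs) so that the remaining chord automatically becomes a chord of $C'$, and the strict inequality $|C'|<|C_i|$ rests precisely on the pigeonhole $\ell_1+\cdots+\ell_6\ge 7$. Indeed, when $|C_i|=6$ each $\ell_j$ equals $1$, every pair-sum equals $2$, no reduction is possible, and $C_6$ together with its three long diagonals gives a cycle of length six with three chords and no smaller chorded cycle inside, showing the hypothesis $|C_i|\ge 7$ is sharp.
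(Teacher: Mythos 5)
The paper does not actually prove this lemma: it is imported verbatim from \cite{GHK2} (listed as ``submitted''), so there is no in-text argument to compare yours against. Judged on its own, your proof is correct and self-contained. Case A is sound: if $e_2=xy$ does not cross $e_1=uv$, then both of $x,y$ lie on the closed arc $A$ of $C_i$ between $u$ and $v$ on one side, $A\cup\{e_1\}$ is a cycle on at most $|C_i|-1$ vertices (the complementary arc has length at least $2$ since $e_1\notin E(C_i)$), and $e_2$ is a chord of it; replacing $C_i$ by this cycle contradicts the minimality of $\left|\bigcup_{i=1}^r C_i\right|$. One point worth making explicit is the degenerate subcase where $e_1$ and $e_2$ share an endpoint (say $x=u$): such a pair is also non-crossing, and your argument still goes through because $y$ cannot be the neighbour of $u$ on $A$ (else $e_2\in E(C_i)$) and $A$ is forced to have at least four vertices, but a referee would want that case named. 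Case B is also correct: three pairwise crossing chords necessarily sit as the three ``long diagonals'' $v_{a_1}v_{a_4}$, $v_{a_2}v_{a_5}$, $v_{a_3}v_{a_6}$ on six distinct cyclically ordered endpoints, the pigeonhole on $\ell_1+\cdots+\ell_6=|C_i|\geq 7$ gives some $\ell_j+\ell_{j+3}\geq 3$, and the cycle you build from the two complementary chords omits $\ell_j+\ell_{j+3}-2\geq 1$ interior vertices while retaining the third chord. Your closing remark that $K_{3,3}$ (a $6$-cycle with three pairwise crossing chords and, being bipartite, no shorter chorded cycle) shows the hypothesis $|C_i|\geq 7$ is needed is a nice bonus that the paper does not record.
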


%%% Lem 4 %%%
\begin{lem}[\cite{GHK2}] 
Let $r\geq 1$ be an integer, and let $\mathscr{C} = \{C_1,\ldots, C_r\}$ be 
a minimal set of $r$  vertex-disjoint chorded cycles in a graph $G$. 
Then $\deg_{C_i}(x)\leq 4$ for any $1\leq i\leq r$ and any 
$x\in V(G)-\bigcup_{i=1}^r V(C_i)$.  Furthermore, for some $C\in \mathscr{C}$ 
and some $x\in V(G)-\bigcup_{i=1}^r V(C_i)$,  
if $\deg_C(x) = 4$, then $|C|=4$, and if $\deg_C(x) = 3$, then $|C|\leq 6$. 
\label{lemma:C is K4 or 6-cycle}
\end{lem}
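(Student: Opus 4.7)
The plan is to argue by contradiction against the minimality of $|\bigcup_{i=1}^r V(C_i)|$. Fix $x \in V(G) \setminus \bigcup_i V(C_i)$ and $C = C_i$ with $m := \deg_C(x) \geq 3$, and list the neighbors of $x$ on $C$ in cyclic order as $y_1, y_2, \ldots, y_m$. Let $a_j = |C[y_j, y_{j+1}]|$ (indices mod $m$), so $\sum_{j=1}^m a_j = |C|$.

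The key gadget is the following: for any three cyclically consecutive neighbors $y_j, y_{j+1}, y_{j+2}$ of $x$ on $C$, form
\[
C' := x,\ y_j,\ C[y_j, y_{j+2}],\ y_{j+2},\ x,
\]
a cycle of length $a_j + a_{j+1} + 2 \geq 4$. Since $y_{j+1}$ is an interior vertex of the arc $C[y_j, y_{j+2}]$, it lies on $C'$ but is non-adjacent to $x$ along $C'$, so the edge $x y_{j+1}$ is a chord of $C'$. Therefore $C'$ is a chorded cycle with $V(C') \subseteq V(C) \cup \{x\}$, and since $x$ and $V(C)$ are disjoint from the other cycles of $\mathscr{C}$, replacing $C$ by $C'$ keeps a family of $r$ vertex-disjoint chorded cycles. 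This contradicts minimality as soon as $|C'| < |C|$.

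It remains to choose $j$ making $C'$ short. Averaging $\sum_{j=1}^m (a_j + a_{j+1}) = 2|C|$ over the $m$ cyclically consecutive pairs yields some $j$ with $a_j + a_{j+1} \leq 2|C|/m$, so one can always arrange
\[
|C'| \leq \tfrac{2|C|}{m} + 2.
\]
The requirement $|C'| < |C|$ is equivalent to $|C| > \tfrac{2m}{m-2}$. Plugging in: for $m \geq 5$ we need $|C| > 10/3$, which is automatic since any chorded cycle has $|C| \geq 4$, so $m \geq 5$ is impossible, giving $\deg_C(x) \leq 4$; for $m = 4$ we need $|C| > 4$, so $m = 4$ forces $|C| = 4$; and for $m = 3$ we need $|C| > 6$, so $m = 3$ forces $|C| \leq 6$.

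There is no real obstacle here. The only care needed is that $C'$ truly is a chorded cycle, which follows cleanly from the middle neighbor $y_{j+1}$ providing the chord, and that $|C'| \geq 4$, which is automatic since $a_j, a_{j+1} \geq 1$. The casework on $m$ is then just a one-line inequality check in each subcase.
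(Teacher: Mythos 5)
The paper does not actually prove this lemma: it is stated in Section~3 as a known result imported from \cite{GHK2} (a submitted paper), so there is no in-paper argument to compare yours against. Judged on its own, your proof is correct and self-contained. The gadget $C' = x, C[y_j,y_{j+2}], x$ with the middle neighbor $y_{j+1}$ supplying the chord $xy_{j+1}$ is sound ($y_{j+1}$ is distinct from $y_j$ and $y_{j+2}$, hence not consecutive to $x$ on $C'$), the replacement preserves vertex-disjointness since $V(C')\subseteq V(C)\cup\{x\}$, and strictly decreasing $|C'|$ strictly decreases $|\bigcup_i V(C_i)|$, contradicting the minimality (A1)-type condition. The averaging bound $a_j+a_{j+1}\le 2|C|/m$ for some $j$ and the resulting threshold $|C|>2m/(m-2)$ give exactly the three conclusions, using $|C|\ge m$ in the cases $m=4,5$. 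One notational nit: with the paper's convention $C[y_j,y_{j+1}]$ includes both endpoints, so $\sum_j |C[y_j,y_{j+1}]| = |C|+m$ rather than $|C|$; you clearly intend $a_j$ to be the edge-length of the arc (equivalently $|C[y_j,y_{j+1})|$), and with that reading every count, including $|C'| = a_j+a_{j+1}+2$, is right. This is worth fixing but is not a gap.
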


%%% Lem 5 %%%
\begin{lem}[\cite{GHK2}] 
Suppose there exist at least five edges connecting two vertex-disjoint 
paths $P_1$ and $P_2$ with $|P_1\cup P_2|\geq 7$. 
Then there exists a chorded cycle in $\left<P_1 \cup P_2\right>$ 
not containing at least one vertex of $\left<P_1\cup P_2\right>$.  
\label{lem: paths 5 edges}
\end{lem}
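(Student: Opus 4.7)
The plan is to case-split on the maximum \emph{cross-degree} of a vertex, by which I mean the number of crossing edges (edges between $V(P_1)$ and $V(P_2)$) incident to that vertex.

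Suppose first that some vertex $x$, say $x \in V(P_1)$ without loss of generality, has three crossing neighbors $y_1, y_2, y_3 \in V(P_2)$, labelled in the order they appear along $P_2$. Then $C = x, P_2[y_1, y_3], x$ is a cycle in $\langle P_1 \cup P_2 \rangle$ with $xy_2$ as a chord. If $|V(P_1)| \geq 2$, then $C$ misses every vertex of $V(P_1) \setminus \{x\}$ and we are done. Otherwise $V(P_1) = \{x\}$, so all five crossing edges are incident to $x$, and $|V(P_2)| \geq 6$; in this subcase $x$ has a fourth crossing neighbor $y_4$ past $y_3$ on $P_2$, and $y_4 \notin V(C)$. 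The case $x \in V(P_2)$ is symmetric.

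Otherwise every vertex has cross-degree at most $2$, which forces $|V(P_1)|, |V(P_2)| \geq 3$ since there are five crossing edges. Label the distinct $P_1$-endpoints of crossing edges as $a_1 < \cdots < a_k$ along $P_1$; then $k \geq 3$, and similarly there are at least three distinct $P_2$-endpoints. My plan is to pick two crossing edges $e_1, e_2$ incident to $a_1$ and $a_2$ respectively and form the cycle $C$ whose sides are $P_1[a_1, a_2]$ and the subpath of $P_2$ joining the other endpoints of $e_1$ and $e_2$. Since $a_3$ lies strictly to the right of $a_2$ on $P_1$, $a_3 \notin V(C)$, so $C$ automatically misses a vertex. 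A chord for $C$ must then come from one of the three remaining crossing edges; because crossing edges incident to $a_i$ with $i \geq 3$ have no endpoint in $V(C) \cap V(P_1)$, the chord must arise from a second crossing edge incident to $a_1$ or $a_2$, and the $P_2$-endpoints of $e_1, e_2$ should be chosen from among the neighbors of $a_1, a_2$ in $P_2$ so that the unused crossing edge's $P_2$-endpoint lies inside the $P_2$-subpath of $C$.

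The main obstacle lies in this last step: verifying that for some choice of $e_1, e_2$ among the (at most four) crossing edges incident to $\{a_1, a_2\}$, the resulting cycle admits a chord. A short case analysis on the cross-degree sequence $(d_1, \ldots, d_k)$ of $P_1$ (summing to $5$ with each $d_i \leq 2$) handles the generic situations --- the key observation is that choosing the leftmost neighbor of $a_1$ on $P_2$ for $e_1$ and the rightmost neighbor of $a_2$ for $e_2$ typically traps the ``extra'' crossing neighbor inside $P_2[v_1, v_2]$. Pathological configurations in which no such choice works can be resolved by swapping the roles of $P_1$ and $P_2$ and repeating the argument with $b_1, b_2$ in place of $a_1, a_2$, since the degree-two cap and the hypothesis $|V(P_1 \cup P_2)| \geq 7$ give symmetric control on the $P_2$ side.
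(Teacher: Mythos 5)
The paper itself gives no proof of this lemma (it is quoted from \cite{GHK2}), so there is no in-paper argument to compare yours against; I have therefore checked your proposal on its own terms. Your first case, where some vertex has at least three neighbors on the other path, is correct and complete. The genuine gap is in the second case, and it sits exactly at the step you yourself flag as ``the main obstacle'': the claim that a chord can always be arranged for a cycle built from $P_1[a_1,a_2]$ (or, after swapping, from $P_2[b_1,b_2]$) is false, and the leftmost/rightmost selection rule together with the swapping fallback does not rescue it. Concretely, take $P_1=a_1a_2a_3a_4a_5$ and $P_2=b_1b_2b_3$ with crossing edges $a_1b_2$, $a_2b_1$, $a_3b_2$, $a_4b_3$, $a_5b_3$. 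Every cross-degree is at most $2$, there are five crossing edges, and $|P_1\cup P_2|=8\geq 7$. On the $P_1$ side, $a_1$ and $a_2$ each have cross-degree $1$, so the only available cycle is $a_1,a_2,b_1,b_2,a_1$, which is chordless, and there are no alternative choices of $e_1,e_2$. On the $P_2$ side, $b_1$ has the single neighbor $a_2$, which lies strictly between the two neighbors $a_1,a_3$ of $b_2$, so both candidate cycles $b_1,b_2,a_1,a_2,b_1$ and $b_1,b_2,a_3,a_2,b_1$ are chordless $4$-cycles. Thus every cycle your procedure can produce here is chordless, while the lemma of course still holds for this configuration: $a_1,a_2,a_3,a_4,a_5,b_3,b_2,a_1$ is a cycle with chords $a_3b_2$ and $a_4b_3$ that misses $b_1$.

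The example also shows what is missing structurally: the chorded cycle one actually needs may have to use a long stretch of both paths and attachment vertices other than the first two on either path, so restricting attention to the pair $(a_1,a_2)$, or even to a single consecutive pair $(a_i,a_{i+1})$, cannot suffice. A complete argument has to consider cycles of the form $P_1[a_i,a_j]\cup P_2[b_p,b_q]$ closed off by two crossing edges, for general indices on both sides, and show that among the five crossing edges some such choice traps a third crossing edge inside as a chord while still leaving a vertex uncovered. That is a substantially larger case analysis than the one you sketch, and the sentence ``pathological configurations \ldots can be resolved by swapping the roles of $P_1$ and $P_2$'' is doing work it provably cannot do. Until that analysis is supplied, the proof is incomplete.
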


%%% Chapter 4 %%% 
\section{Proof of Theorem \ref{thm:main}}

\hspace*{0.5cm} 
Suppose Theorem \ref{thm:main} does not hold. 
We first consider the case where $k=1$. 
Then $n \geq 12t+13$ and $\sigma_t(G) \geq 2t+1$. 
Noting $\lceil n/12\rceil \geq t+2$, by Lemma \ref{indep}, 
$G$ contains an independent set $I$ of order $t$ with each vertex of $I$ having degree at most $2$ in $G$. 
Then $\deg_G(I)\leq 2t$, a contradiction. 
Thus we assume $k\geq 2$.  
Let $G$ be an edge-maximal counter-example.  
If $G$ is complete, then $G$ contains $k$ vertex-disjoint chorded cycles. 
Thus we may assume $G$ is not complete.  
Let $xy\not\in E(G)$ for some $x, y\in V(G)$, and define $G'=G+xy$, the graph 
obtained from $G$ by adding the edge $xy$. 
By the edge-maximality of $G$, $G'$ is not a counter-example.  
Thus $G'$ contains $k$ vertex-disjoint chorded cycles $C_1, \ldots, C_k$.  
Without loss of generality, we may assume $xy\not\in\bigcup_{i=1}^{k-1}E(C_i)$, 
that is, $G$ contains $k-1$ vertex-disjoint chorded cycles.  
Over all sets of $k-1$ vertex-disjoint chorded cycles, 
choose $C_1, \ldots , C_{k-1}$, where
$\mathscr{C}=\bigcup_{i=1}^{k-1}C_i$ and $H=G-\mathscr{C}$, such that:   

\vspace{0.2cm}

(A1) $|\mathscr{C}|$ is as small as possible,  

\vspace{0.2cm}

(A2) subject to (A1), $comp(H)$ is as small as possible, and

\vspace{.2cm}

(A3) subject to (A1) and (A2), the number of $K_4$'s in $\mathscr{C}$ is as large as possible. 

\medskip

We may also assume $H$ does not contain a chorded cycle, otherwise, 
$G$ contains $k$ vertex-disjoint chorded cycles, a contradiction. 
Theorem \ref{thm:main} holds by Theorems 1-4 for all $t \leq 4$. 
Thus we also assume $t \geq 5$. 

\setcounter{claim}{0}
\setcounter{equation}{0}

\noindent
\begin{claim}
$H$ has an order at least $12t+13$.
\label{claim:12t+13}
\end{claim}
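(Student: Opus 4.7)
The plan is to argue by contradiction. Suppose $|V(H)| \leq 12t+12$; then the hypothesis $n \geq (10t-1)(k-1) + 12t + 13$ forces
$|\mathscr{C}| = n - |V(H)| \geq (10t-1)(k-1) + 1$, so by pigeonhole some $C_j \in \mathscr{C}$ satisfies $|C_j| \geq 10t$. The strategy is then to contradict minimality (A1) by constructing a chorded cycle on a strict subset of $V(H) \cup V(C_j)$ that is vertex-disjoint from $\mathscr{C}\setminus C_j$, and swapping it in for $C_j$.

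The structural starting point is Lemma~\ref{max two chords}: since $|C_j|\geq 10t \geq 7$, $C_j$ has at most two chords, which cross if both are present. Hence at least $|C_j|-4 \geq 10t-4$ vertices of $C_j$ are stem vertices of $\langle C_j\rangle$ (Proposition~\ref{stem1}). Next, I would apply Lemma~\ref{indep} to the non-chorded graph $H$ to obtain an independent set $I \subseteq V(H)$ with $|I|\geq |V(H)|/12$ and $\deg_H(v) \leq 2$ for every $v\in I$. If $|V(H)|\geq 12t$, take $|I|=t$ directly; otherwise $|\mathscr{C}|$, and hence $|C_j|$, is even larger, and I augment $I$ with alternating non-chord-endpoint stem vertices of $C_j$ (at least $\lfloor|C_j|/2\rfloor - 2$ of which are mutually independent in $G$).

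Applying $\sigma_t(G) \geq 3kt-t+1$ to these $t$ independent vertices and using Lemma~\ref{lemma:C is K4 or 6-cycle} to cap $\deg_{C_i}(v)$ (at $4$ only when $|C_i|=4$, at $3$ only when $|C_i|\leq 6$, and at $2$ otherwise), I would classify the cycles of $\mathscr{C}$ by size into three types and thereby derive the inequality $2a + b \geq k$, where $a$ is the number of $4$-cycles and $b$ the number of $5$- or $6$-cycles. Combined with $|\mathscr{C}|\geq (10t-1)(k-1)+1$ and $|C_j|\geq 10t$, this accounting forces a large lower bound on the number of edges from $I$ into $V(C_j)$. With these many edges in hand, Lemma~\ref{lem: paths 5 edges} applied to two suitable subpaths of $C_j$ (possibly extended by $H$-neighbors of $I$) yields a chorded cycle strictly inside $V(H) \cup V(C_j)$ using fewer vertices than $C_j$, contradicting (A1).

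The main obstacle will be this last step: guaranteeing five \emph{direct} edges between two vertex-disjoint paths, as Lemma~\ref{lem: paths 5 edges} demands. Because $C_j$ has at most two chords, the remaining three edges must arise by exploiting multiplicities of $I$-to-$C_j$ edges and the stem-vertex structure of $\langle C_j \rangle$, requiring a careful case split by the cycle-type counts $(a,b,c)$ and by whether $|V(H)| \geq 12t$.
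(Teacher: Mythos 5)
There is a genuine gap, and it sits at the heart of your plan. Your framework (assume $|H|\leq 12t+12$, conclude by pigeonhole that some $C_j$ has $|C_j|\geq 10t$, then contradict (A1)) matches the paper's, but your mechanism for producing the contradiction does not work. You propose to apply the degree sum condition to a \emph{single} independent $t$-set $I$ drawn from $H$ (padded with stem vertices of $C_j$ if $|H|<12t$) and claim that the resulting accounting ``forces a large lower bound on the number of edges from $I$ into $V(C_j)$.'' It does the opposite: since $|C_j|\geq 10t>6$, Lemma~\ref{lemma:C is K4 or 6-cycle} caps $\deg_{C_j}(v)\leq 2$ for every $v\in V(H)$, so the $H$-vertices of $I$ send at most $2t$ edges to $C_j$ in total. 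Your inequality $2a+b\geq k$ only says that the surplus degree is absorbed by \emph{small} cycles ($4$-, $5$-, $6$-cycles); it gives no pressure on the large cycle $C_j$ and hence no contradiction with $|C_j|\geq 10t$. There are also secondary problems: independence between the $H$-part and the $C_j$-stem part of your augmented set is not addressed, and Proposition~\ref{stem1} does not apply to $\langle C_j\rangle$ (which is chorded), though the at-most-two-chords bound of Lemma~\ref{max two chords} does give you the stem count you want.

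The missing idea is to put the independent sets \emph{inside the big cycle} and to use many of them at once. Writing $|C_1|=st+r\geq 10t$ with $s\geq 10$, the paper constructs $s$ pairwise disjoint independent $t$-sets $X_1,\dots,X_s$ in $C_1$ (a cyclic labelling argument, with small repairs near the at most two crossing chords) whose total degree inside $C_1$ is at most $2st+4$. The aggregate degree sum $s(3kt-t+1)$ then overwhelms both $C_1$ itself and $H$ (using $|H|\leq 12t+12$ to bound $|E(H,C_1)|\leq 2(12t+12)$), forcing more than $3st$ edges from $\mathcal{X}=\bigcup X_i$ into a single other cycle $C'$. That concentration yields a vertex $v^*\in\mathcal{X}$ with $\deg_{C'}(v^*)\geq 4$ and then at least $97$ edges from $C_1-v^*$ to $C'$, so some interval of $C'$ between consecutive neighbors of $v^*$ receives at least $25$ edges; Lemma~\ref{lem: paths 5 edges} plus the cycle through $v^*$ and the remaining intervals then produce two vertex-disjoint chorded cycles in $\langle C_1\cup C'\rangle$ on fewer vertices, contradicting (A1). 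Without this ``multiply the degree condition by $s$'' step, a single $t$-set simply cannot generate the five-plus edges between disjoint paths that Lemma~\ref{lem: paths 5 edges} needs in the right place, which is exactly the obstacle you flag at the end of your proposal but do not overcome.
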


\begin{proof}
Suppose this claim fails to hold, that is, suppose $|H| \leq 12t+12$. 
First we prove the following subclaim. 
\begin{subclaim}
For each $1\leq i\leq k-1$, $|C_i|\leq 10t-1$. 
\label{10t-1}
\end{subclaim} 

\begin{proof}
Suppose Subclaim \ref{10t-1} fails to hold, that is, $|C_i|\geq 10t$ for some $1\leq i\leq k-1$. 
Without loss of generality, let $|C_1|\geq |C_2|\geq \cdots \geq |C_{k-1}|$. 
In fact, let $|C_1|=st+r\geq 10t\geq 50$, with $s\geq 10$ and $0\leq r \leq t-1$. 

\begin{sub-subclaim}
For $s \geq 10$, the cycle $C_1$ contains $s$ vertex-disjoint sets 
$X_1, X_2, \ldots, X_s$ each with $t$ independent vertices such that 
$\deg_{C_1}(\bigcup_{i=1}^s X_i) \leq 2st + 4$. 
\label{2st+4}  
\end{sub-subclaim}

\begin{proof} 
For any $st$ vertices of $C_1$, their degree sum in $C_1$ is at most $2st + 4$, 
since by Lemma \ref{max two chords}, $C_1$ has at most two chords. 
Thus, it only remains to show that $C_1$ contains $s$ vertex-disjoint sets of $t$ 
independent vertices each. 
Recall $|C_1| = st + r\geq 10t$. 
Start anywhere on $C_1$ and label the first $st$ vertices of $C_1$ with labels 1 through $s$ 
in order, starting over again with 1 after using label $s$. 
If $r\geq 1$, then label the remaining $r$ vertices of $C_1$ with the labels $s+1,\ldots, s+r$. 
The labeling above yields $s$ vertex-disjoint sets of $t$ vertices each, 
where all the vertices labeled with 1 are one set, all the vertices labeled with 2 are another set, and so on. 
Given this labeling, any vertex in $C_1$ has a different label than 
the vertex that precedes it on $C_1$ and the vertex that succeeds it on $C_1$. 
Let $C_0$ be the cycle obtained from $C_1$ by removing all chords. 
Then the vertices in each of the sets are independent in $C_0$. 
Thus, the only way vertices in the same set are not independent in $C_1$ is if the endpoints 
of a chord of $C_1$ were given the same label. 
Note any vertex labeled $i$ is distance at least $s\geq 10$ in $C_0$ from any other vertex labeled $i$. 
Thus, if a vertex and the neighbor preceding it on $C_1$ or the neighbor succeeding it on $C_1$ 
have their labels exchanged, then the vertices in each of the classes are independent in $C_1$. 

\vspace{0.25cm}

%%% Case 1 %%%
\noindent
\textbf{Case 1.} No chord of $C_1$ has endpoints with the same label. 

Then there exist $s$ vertex-disjoint sets of $t$ independent vertices each in $C_1$. 

\vspace{0.25cm}

%%% Case 2 %%%
\noindent
\textbf{Case 2.} Exactly one chord of $C_1$ has endpoints with the same label. 

Recall $C_1$ contains at most two chords, and 
if $C_1$ contains two chords, then these chords must be crossing. 
Since $|C_1|\geq 50$, even if $C_1$ contains two chords, each chord has an endpoint such that 
one of the endpoint's neighbors in $C_1$ is not an endpoint of the other chord. 
Choose such an endpoint of the chord whose endpoints were assigned the same label, 
and exchange the label of this vertex for its non-endpoint neighbor. 
The vertices in each of the resulting classes are still independent in $C_1$, 
and now no chord of $C_1$ has endpoints with the same label. 
Thus there exist $s$ vertex-disjoint sets of $t$ independent vertices each in $C_1$. 

\vspace{0.25cm}

%%% Case 3 %%%
\noindent
\textbf{Case 3.} Two chords of $C_1$ each have endpoints with the same label. 

In this case, note two chords are crossing. 
Suppose an endpoint of one chord of $C_1$ is adjacent to an endpoint of the other chord on $C_1$. 
Now exchange the labels of these adjacent endpoints. 
Then the vertices in each of the resulting classes are still independent in $C_1$, 
and now no chord of $C_1$ has endpoints with the same label. 
Thus there exist $s$ vertex-disjoint sets of $t$ independent vertices each in $C_1$. 

Next suppose no endpoint of one chord of $C_1$ is adjacent to an endpoint of the other chord on $C_1$. 
Let $x_1x_2, y_1y_2$ be the two distinct chords of $C_1$. 
Since the two chords are crossing, without loss of generality, we may assume 
$x_1, y_1, x_2, y_2$ are in that order on $C_1$, and the label of $x_1$ is 1. 
Then the label of $x_1^+$ is 2. 
Now we exchange the labels of $x_1$ for $x_1^+$, that is, the label of $x_1$ is 2 and  
the label of $x_1^+$ is 1. 
Next we exchange the labels of $y_2$ for $y_2^-$. 
Note $y_2\ne x_1^-$ by our assumption that no endpoint of one chord of $C_1$ is adjacent to an 
endpoint of the other chord on $C_1$. 
Thus, the vertices in each of the resulting classes are independent in $C_1$,  
and no chord of $C_1$ has endpoints with the same label. 
Hence there exist $s$ vertex-disjoint sets of $t$ independent vertices each in $C_1$, 
completing the proof of Subclaim \ref{2st+4}.  
\end{proof}

%% Proof of Thm 5 %%
Recall that, by assumption, $|H| \leq 12t+12$ and $|C_1|\geq 50$. 
Let $X_1,X_2,\ldots, X_s$ be as in Subclaim \ref{2st+4}, 
and let $\mathcal{X}=\bigcup_{i=1}^s X_i$. 
Further, note that $\deg_{C_1}(v)\leq 2$ for every $v\in V(H)$ or a shorter chorded cycle would
exist by Lemma \ref{lemma:C is K4 or 6-cycle}, contradicting (A1). 
Thus 
\begin{align}
|E(H,C_1)| \leq 2(12t+12)
\label{2(12t-1)}. 
\end{align}

First suppose that $k=2$. 
Then $C_1$ is the only cycle in $\mathscr{C}$. 
By Subclaim \ref{2st+4},   
\begin{align*}
|E(C_1,H)| &\geq \deg_G(\mathcal{X})-\deg_{C_1}(\mathcal{X}) \\
&\geq s(3kt-t+1)-(2st+4)\\
&= s(6t-t+1)-(2st+4)\\
&= 3st+s-4, 
\end{align*}
but since $s\geq 10$ and $t\geq 5$, 
we see that $3st+s-4\geq 30t+6>2(12t+12)$, contradicting (\ref{2(12t-1)}). 
Thus we may assume that $k\geq 3$. 
Then, by Subclaim \ref{2st+4} and (\ref{2(12t-1)}), 
\begin{align}
|E(\mathcal{X}, \mathscr{C}-C_1)| &=
\deg_{G}(\mathcal{X})-\deg_{C_1}(\mathcal{X})-\deg_H(\mathcal{X})\nonumber \\
&\geq s(3kt-t+1)-(2st+4)-2(12t+12)\nonumber \\ 
&=3kst-3st+s-24t-28. 
\label{3stk-}
\end{align}
Since $s\geq 10$, we have $3st \geq 30t =24t+6t$. 
Thus 
\begin{align}
24t &\leq 3st-6t.
\label{-3st}
\end{align}
By (\ref{3stk-}) and (\ref{-3st}), we have 
\begin{align*}
3kst-3st+s-24t-28 &\geq 3kst-3st+s-(3st-6t)-28  \\
&= 3st(k-2)+s+6t-28 \\
&\geq 3st(k-2)+12.
\end{align*}
Thus $|E(\mathcal{X}, C')|>3st$ for some $C'$ in $\mathscr{C}-C_1$. 
Let $h=\max\{\deg_{C'}(v) : v \in \mathcal{X}\}$. 
Let $v^{*}\in \mathcal{X}$ with $\deg_{C'}(v^{*})=h$. 
Since $|\mathcal{X}|=st$, if $h\leq 3$, then $|E(\mathcal{X}, C')|\leq 3st$, a contradiction. 
Thus we may assume that $h\geq 4$. 
By the maximality of $C_1$, $|C'| \leq |C_1|=st+r$. 
It follows that $h=\deg_{C'}(v^{*}) \leq |C'| \leq st+r$. 
Recall $s\geq 10$, $t\geq 5$ and $0\leq r \leq t-1$. 
Then 
\begin{align}
|E(\mathcal{X}-\{v^{*}\}, C')| &\geq (3st+1)-\deg_{C'}(v^{*}) \geq (3st+1)-(st+r) \nonumber \\
&=2st+1-r \geq 2st+1-(t-1) \nonumber \\
&=2st-t+2 \nonumber \\
&\geq 97. 
\label{97 edges out}
\end{align}
Since $h=\deg_{C'}(v^{*})\geq 4$, let $v_1,v_2,v_3,v_4$ be neighbors of $v^{*}$ in that order on $C'$. 
These vertices partition $C'$ into four intervals $C'[v_i,v_{i+1})$ for each $1\leq i \leq 4$, where $v_5=v_1$. 
By (\ref{97 edges out}), there exist at least 97 edges from $C_1-v^{*}$ to $C'$. 
Thus some interval clearly receives at least 25 of these edges. 
Without loss of generality, say $C'[v_4,v_1)$ is such an interval. 
Then, by Lemma \ref{lem: paths 5 edges}, $\left<(C_1-v^{*}) \cup C'[v_4,v_1)\right>$  
contains a chorded cycle not containing at least one vertex of  
$\left<(C_1-v^{*}) \cup C'[v_4,v_1)\right>$.  
Also, $v^{*},C'[v_1,v_3],v^{*}$ is a cycle with chord $v^{*}v_2$,  
and it uses no vertices from $ C'[v_4,v_1)$. 
Thus we have two shorter vertex-disjoint chorded cycles in $\left<C_1 \cup C' \right>$, 
contradicting (A1).  
Hence Subclaim \ref{10t-1} holds. 
\end{proof}

Now as $n\geq (10t-1)(k-1)+12t+13$ and $|\mathscr{C}|\leq (10t-1)(k-1)$ by Subclaim \ref{10t-1}, 
we have $|H|\geq 12t+13$, a contradiction.  
This completes the proof of Claim \ref{claim:12t+13}.  
\end{proof}

\smallskip

By Claim \ref{claim:12t+13}, $|H| \geq 12t+13$. 
Noting $\lceil |H|/12\rceil \geq t+2$, by Lemma \ref{indep}, 
there exists an independent set $I^*$ of order $t + 2$ in $H$ such that the degree in $H$ 
of each vertex of $I^*$ is at most 2.   
We now select an independent set $I$ of order $t$ from $I^*$ as follows.  
If $H$ is connected, we select any subset $I$ of order $t$.  
If $H$ is not connected, then each component has a longest path with
endpoints of degree at most 2 in $H$ (or else the component contains a chorded cycle).  
If two of these endpoints are in $I^*$, we select at least two of them, say $s_1$ and $s_2$,
from different components of $H$ to be in $I$.   
Note that each of $s_1$ and $s_2$ is not a cut-vertex for its component.
If $s_1$ and $s_2$ (one or both) are not in $I^*$, then they might have adjacencies in $I^*$.  
We can remove the at most two adjacencies of say
$s_1$ from $I^*$, and place $s_1$ in $I^*$.   
We can do the same for $s_2$ if necessary.   
Then $I^*$ still contains at least $t$ independent vertices with degree at most 2 in $H$.
We select a subset $I$ of order $t$ in $I^*$ that contains both $s_1$ and $s_2$.  
Note that
\begin{align*}
\deg_{\mathscr{C}}(I) & = \deg_G(I) - \deg_H(I)\\
  & \geq (3kt - t + 1) - 2t\\
  & = 3kt -3t + 1\\
  & = 3t(k-1) + 1.
\end{align*}

Therefore, there exists a cycle $C$ in $\mathscr{C}$ such that
$I$ sends at least $3t+1$ edges to $C$.  Thus, by Lemma \ref{lemma:C is K4 or 6-cycle},
since no vertex of $H$ sends more than four edges to a cycle of $\mathscr{C}$,
we see that the degree sequence $D$ of edges from $I$ to $C$ is of the form 
$(4, 4, 4, 4, \ldots)$, $(4, 4, 4, \ldots)$, $(4, 4, 3, \ldots, 3, 2)$ or $(4, 3, \ldots, 3)$. 
Note that if $D=(4, 4, 4, \ldots)$, then $D=(4, 4, 4, 3, \ldots)$, that is, 
$D$ contains at least one 3, or $D=(4, 4, 4, 2, 2)$ for $t=5$. 
Further, since any of these degree sequences contains at least one 4, 
by Lemma \ref{lemma:C is K4 or 6-cycle} we see that $|C|=4$.  
In fact, $C$ induces a $K_4$, otherwise, the vertex of degree 4 along with a triangle in $C$ 
would produce a $K_4$, contradicting (A3). 
Let $C=w_1, w_2, w_3, w_4, w_1$.  

If $D$ has at least two 4's and at least two 3's, 
then it is simple to construct two vertex-disjoint chorded 4-cycles 
from $C$ and these vertices of $I$, as the two vertices of degree 3 are
adjacent to the ends of an edge of $C$ and the two vertices of degree 4
are adjacent to the ends of a different independent edge of $C$.
This produces two vertex-disjoint chorded cycles, 
implying $G$ contains $k$ vertex-disjoint chorded cycles, a contradiction. 
Thus we have only to consider the two cases where $D=(4, 4, 4, 2, 2)$ and $D=(4, 3, \ldots, 3)$. 

First consider $D=(4, 4, 4, 2, 2)$.   
Let $z_1$ be a vertex of $I$ with degree 2 to $C$ and $z_2, z_3, z_4$ be the vertices of $I$ with degree 4.
Without loss of generality, we may assume that $w_1,w_2\in N_C(z_1)$. 
Then $z_1,w_2,z_2,w_1,z_1$ is a cycle with chord $w_1w_2$. 
Also, $z_3,w_3,z_4,w_4,z_3$ is a second cycle with chord $w_3w_4$, 
implying $G$ contains $k$ vertex-disjoint chorded cycles, a contradiction. 

Next consider $D=(4, 3, \ldots , 3)$. 
Let $\deg_C(z_0)=4$ and $\deg_C(z_i)=3$ for each $1\leq i\leq 4$. 
First we prove that 
\begin{align}
%H \text{ does not contain a component with a vertex of degree 4 and at least three vertices of degree } 3
H \text{ has no component with one vertex of degree 4 and at least three vertices of degree } 3.
\label{deg4333}
\end{align}
%\begin{align}
%H\ {\rm does\ not\ contain\ a\ component\ with\ one\ vertex\ of\ degree\ 4\ and\ at\ least\ three\ vertices\ of\ degree\ 3.}
%\label{deg4333}
%\end{align}
Suppose not, that is, $H$ has a component $H_0$ containing $z_i$ for each $0\leq i\leq 3$. 
Since $H_0$ is connected, there exists a path $P$ from $z_0$ to $z_i$ for some $1\leq i\leq 3$. 
Without loss of generality, we may assume that $i=1$ and $P$ contains neither $z_2$ nor $z_3$. 
Since $\deg_C(z_i)=3$ for each $i\in \{2,3\}$, we may assume that $w_1,w_2\in N_C(z_i)$. 
Then $z_2,w_2,z_3,w_1,z_2$ is a cycle with chord $w_1w_2$. 
Since $\deg_C(z_1)=3$, without loss of generality, we may assume that $w_3\in N_C(z_1)$. 
Then $P[z_0,z_1],w_3,w_4,z_0$ is a second cycle with chord $z_0w_3$, a contradiction. 
Thus (\ref{deg4333}) holds. 

Therefore, we assume that $H$ is not connected, that is, $comp(H)\geq 2$. 
Let $H_1,H_2,\ldots ,H_{comp(H)}$ be the components of $H$. 
Note that it is sufficient to consider the case where each component of $H$ has at least one 
vertex contained in the degree sequence $D=(4, 3, \ldots , 3)$. 
Without loss of generality, for each $i\in \{1,2\}$, we may assume that $s_i\in V(H_i)$ and $\deg_C(s_1)\geq \deg_C(s_2)$. 
Recall, for each $i\in \{1,2\}$, $s_i$ is not a cut-vertex for $H_i$.

\vspace{0.25cm}

%%% Case 1 %%%
\noindent
\textbf{Case 1.} For each $i\in \{1,2\}$, $\deg_C(s_i)=3$. 

\vspace{0.25cm}

In this case, without loss of generality, we may assume that $s_i=z_i$ for each $i\in \{1,2\}$. 

\vspace{0.25cm}

\noindent
{Subcase 1.} Suppose $comp(H)=2$. 

\vspace{0.25cm}

Without loss of generality, we may assume that $z_0\in V(H_1)$. 
By (\ref{deg4333}), we may assume that $z_4\in V(H_2)$. 
For each $i\in \{1,2\}$, since $\deg_C(s_i)=3$, we may assume that $w_1,w_2\in N_C(s_i)$. 
Then $C'=s_1,w_2,s_2,w_1,s_1$ is a 4-cycle with chord $w_1w_2$. 
Since $\deg_C(z_4)=3$, without loss of generality, we may assume that $w_3\in N_C(z_4)$. 
Since $\deg_C(z_0)=4$, $w_4\in N_C(z_0)$.  
Then there exists a path $z_0,w_4,w_3,z_4$ connecting $H_1$ and $H_2$. 
Replacing $C$ in $\mathscr{C}$ by $C'$, we consider the new $H'$. 
Note that $H_i-s_i$ is connected for each $i\in \{1,2\}$. 
Then $comp(H')\le comp(H)-1$. 
This contradicts (A2). 

\vspace{0.25cm}

\noindent
{Subcase 2.} Suppose $comp(H)\geq 3$. 

\vspace{0.25cm}

\noindent
{Subcase 2.1.}  For some $i\in \{1,2\}$, $z_0\in V(H_i)$. 

Without loss of generality, we may assume that $z_0\in V(H_1)$, and $z_4\in V(H_3)$ 
by our assumption that each component of $H$ has at least one vertex contained in the degree 
sequence $D=(4, 3, \ldots , 3)$. 
By the same arguments as Subcase 1, we can reduce the number of components of $H$, 
a contradiction. 

\vspace{0.25cm}

\noindent
{Subcase 2.2.} For some $i\in \{1,2, \ldots, comp(H)\}-\{1,2\}$, $z_0\in V(H_i)$. 

Without loss of generality, we may assume that $z_0\in V(H_3)$. 
Now consider the cycle $C'$ as in Subcase 1. 
If $z_3\in V(H_i)$ for some $i\in \{1,2, \ldots, comp(H)\}-\{3\}$, 
then we apply the same arguments as Subcase 1. 
Thus we may assume that $z_3\in V(H_3)$. 
Since $\deg_C(z_3)=3$, without loss of generality, we may assume that $w_3\in N_C(z_3)$. 
Since $H_3$ is connected, there exists a path $P$ from $z_0$ to $z_3$. 
Then $P[z_0,z_3],w_3,w_4,z_0$ is a second cycle with chord $z_0w_3$, a contradiction. 

\vspace{0.25cm}

%%% Case 2 %%%
\noindent
\textbf{Case 2.} Suppose $\deg_C(s_1)=4$ and $\deg_C(s_2)=3$. 

\vspace{0.25cm}

In this case, note that $s_1=z_0$. 
Without loss of generality, we may assume that $s_2=z_1$. 

\vspace{0.25cm}

\noindent
{Subcase 1.} Suppose $comp(H)=2$. 

\vspace{0.25cm}

\noindent
{Subcase 1.1.} For some $2\le i\le 4$, $z_i\in V(H_1)$.  

Without loss of generality, we may assume that $z_2\in V(H_1)$.  
Since $\deg_C(z_2)=3$ and $\deg_C(s_2)=3$, $N_C(z_2)\cap N_C(s_2)\ne \emptyset$. 
Without loss of generality, we may assume that $w_1\in N_C(z_2)\cap N_C(s_2)$. 
Since $\deg_C(s_1)=4$, $C'=s_1,w_2,w_3,w_4,s_1$ is a 4-cycle with chord $s_1w_3$. 
Replacing $C$ in $\mathscr{C}$ by $C'$, we consider the new $H'$. 
Note that $H_1-s_1$ is connected.  
Then $comp(H')\le comp(H)-1$. 
This contradicts (A2). 

\vspace{0.25cm}

\noindent
{Subcase 1.2.} For each $2\le i\le 4$, $z_i\in V(H_2)$.  

Since $\deg_C(s_2)=3$, without loss of generality, we may assume that $w_i\in N_C(s_2)$ 
for each $1\le i\le 3$. 
If $w_4\in N_C(z_i)$ for some $2\le i\le 4$, then we apply the same arguments as Subcase 1.1. 
Thus we may assume that $N_C(s_2)=N_C(z_i)$ for each $2\le i\le 4$.  
Then $C'=s_2,w_1,w_4,w_2,s_2$ is a 4-cycle with chord $w_1w_2$. 
Replacing $C$ in $\mathscr{C}$ by $C'$, we consider the new $H'$. 
Note that $H_2-s_2$ is connected. 
Since $w_3\in N_C(s_1)\cap N_C(z_2)$, $comp(H')\le comp(H)-1$. 
This contradicts (A2). 

%Since $\deg_C(z_i)=3$ for each $2\le i\le 4$, 
%note that $N_C(z_2)\cap N_C(z_3)\cap N_C(z_4)\ne \emptyset$. 
%Let $w_1\in N_C(z_2)\cap N_C(z_3)\cap N_C(z_4)$. 
%Since $H_2$ is connected, without loss of generality, we may assume that 
%$H_2$ contains a path $P$ from $z_2$ to $z_3$ passing through $z_4$. 
%Then $P[z_2,z_3],w_1,z_2$ is a cycle with chord $z_4w_1$. 
%Since $\deg_C(s_1)=4$, $s_1,w_2,w_3,w_4,s_1$ is a second cycle with chord $s_1w_3$, 
%a contradiction. 

\vspace{0.25cm}

\noindent
{Subcase 2.} Suppose $comp(H)\geq 3$. 

\vspace{0.25cm}

Without loss of generality, we may assume that $z_2\in V(H_3)$ 
by our assumption that each component of $H$ has at least one vertex contained in the degree sequence $D$.  
By the same arguments as Subcase 1.1, we can reduce the number of components of $H$, 
a contradiction. 

\vspace{0.25cm}

This completes the proof of Theorem \ref{thm:main}. 
\qed

%%% Chapter 5 %%%
\section{Conclusion}

\hspace*{0.5cm} 
We believe that Lemma \ref{indep} may be improved to guarantee a larger independent set of low-degree 
vertices in every non-chorded connected graph. In particular, we conjecture the following.

\begin{conj}
If $H$ is a non-chorded connected graph of order $n$, then $H$ contains an independent set 
$I$ of order at least $n/6$ with each vertex of $I$ having degree at most $2$ in $H$. 
\end{conj}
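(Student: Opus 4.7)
The plan is to refine the argument for Lemma~\ref{indep}, where the final step loses a factor of $2$ by passing from $|S|\ge n/6$ to an independent subset of $\langle S\rangle$. To reach $n/6$ I would avoid that halving by constructing the independent set directly along the block decomposition of Proposition~\ref{graphdecomp}, exploiting local structure in each biconnected block rather than invoking a generic bipartite bound at the end.

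First, for each non-chorded biconnected block $B$ of order $m\ge 4$ that is not itself a cycle, Proposition~\ref{n/3stems} supplies at least $(m+4)/3$ stems, and the subgraph induced on these stems in $B$ has maximum degree at most $2$. It contains no cycle, since a cycle consisting entirely of stems would force $B$ to equal that cycle by biconnectedness; hence the stem subgraph of $B$ is a disjoint union of paths, which contains an independent set of size $\lceil |L_B|/2\rceil\ge (m+4)/6$. Cycle blocks contribute $\lfloor m/2\rfloor$ independent stems; pendant trees of $H-F_k$ contribute $n_T/4$ each via Lemma~\ref{tree}(ii); and ear interiors of length $p$ contribute $\lceil p/2\rceil$. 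The idea is to stitch these along the chain $F_1\subset F_2\subset\cdots\subset F_\ell=H$, maintaining an invariant of the form $|I_i|\ge |F_i|/6+c_i$, and discarding an attachment endpoint from the running set whenever a gluing step raises its degree above $2$.

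The hard part will be the boundary bookkeeping at cut vertices. Each gluing can force up to two vertices (the endpoints $a_i,b_i$ of the attaching path) out of the running $I$, but the block-level surplus $\lceil |L_B|/2\rceil-|B|/6=2/3$ is too small to absorb even one such loss on its own. Consequently the induction must exploit the stronger contributions from the pendant trees (rate $1/4>1/6$) and the ear interiors (rate $1/2>1/6$) to compensate. The tightest configurations are graphs assembled from many small blocks (triangles, $4$-cycles) sharing cut vertices, and graphs in which the stem chains are uniformly long and of even length so that the ceiling bonus vanishes; verifying that the compensation mechanism succeeds in every extremal configuration, and handling by direct case analysis those cases where the inductive inequality seems to be just missed, will be the main technical obstacle.
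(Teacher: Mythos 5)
The statement you are addressing is presented in the paper only as a conjecture: the authors prove the weaker bound $n/12$ (Lemma~\ref{indep}) and then exhibit two families of examples ($G_1$ and $G_2$) showing that the ratio $1/6$ would be best possible, but they offer no proof of the $n/6$ bound. So there is no proof in the paper to compare against, and your proposal must stand on its own as a complete argument. It does not: you explicitly defer the decisive step --- verifying that the compensation mechanism absorbs the losses at cut vertices in every configuration --- to a future case analysis, and that step is precisely where the difficulty of the conjecture lives. A plan that ends by naming ``the main technical obstacle'' is a research program, not a proof.

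The gap is not merely expository. Your per-block surpluses are $\lceil |L_B|/2\rceil - |B|/6 \geq 2/3$ for a non-cycle block and $\lfloor m/2\rfloor - m/6 = 1/2$ for a triangle, while a single gluing can cost up to $2$ (a cut vertex counted in both the running set and the new block's set must be discarded from both), and an interior block all of whose stems are cut vertices of $H$ contributes nothing at all, i.e.\ runs a deficit of $|B|/6$. You propose to cover such deficits with the stronger rates from pendant trees ($1/4$) and ear interiors ($1/2$), but the paper's own extremal example $G_2$ --- a tree of triangles joined by single edges --- has no pendant trees and no ears of positive length, yet contains on the order of $n/6$ interior triangles each contributing $0$ to the independent set. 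There the compensation can only come from the leaf triangles attached later in the block ordering, which means your invariant $|I_i|\geq |F_i|/6 + c_i$ cannot be maintained monotonically along the chain $F_1\subset F_2\subset\cdots\subset F_\ell$: the running set would be in deficit for many consecutive steps, and you would need an amortized or charging argument (for instance, charging each deficient interior block to the surplus of the pendant blocks eventually hanging below it) that you have not formulated. Since $G_2$ shows the bound is asymptotically tight, that accounting must balance essentially exactly; until it is actually carried out, the conjecture remains open.
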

This $1/6$ proportion of vertices would be best possible, 
%This proportion of vertices $n/6$ would be best possible, 
as we demonstrate with two examples $G_1$ and $G_2$.

First, define the graph $H$ with 6 vertices to be the graph containing a 5-cycle $x_1, x_2, x_3, x_4, x_5, x_1$ 
and where the sixth vertex $x_6$ is adjacent to $x_2$ and $x_5$.  
To form $G_1$, take $k$ copies of $H$ called $H^1, H^2, \ldots, H^k$. 
Let $x_i^j\in V(H^j)$ with $1\leq i\leq 6$ and $1\leq j\leq k$, 
and let $x_6^jx_1^{j+1}\in E(G_1)$ for each $1\leq j\leq k-1$. 
Aside from $H^1$ and $H^k$, each copy of $H$ has exactly two vertices of degree 2, 
and only one of these can be included in the independent set $I$.  
Each of $H^1$ and $H^k$ have two independent vertices of degree 2, so $|I|=n/6+2$.

Second, construct $G_2$ by starting with a triangle, and for each of its vertices, 
connect it by an edge to a new triangle. 
Then for each vertex of degree 2 in this graph, connect it by an edge to a new triangle.  
Repeat this process $k$ times.  
In $G_2$, every vertex of degree 2 is adjacent to another vertex of degree 2, 
so only one of each pair can be in $I$.  
By adding a triangle adjacent to each vertex of degree 2 in the pair, 
we can increase the size of $I$ by 1, 
and we have added 6 vertices.  
That means the limit $$\lim_{k \to \infty} \frac{\vert I \vert}{n} = \frac{1}{6},$$
so no larger proportion than $1/6$ of the vertices of $G_2$ can be in $I$.
\medskip

We also note the following easy-to-prove facts about graphs with no chorded cycles.  We did not use these facts in our proof of Theorem~\ref{thm:main} but they may be of interest to the reader.

%%% Fact 1 %%%
\begin{fact} 
If $G$ is a graph of order n with no chorded cycles, then there exists an ordering of the
vertices of $G$ such that each vertex has at most two neighbors preceding it in this ordering. 
Further $G$ is a tripartite graph. 
\label{order}
\end{fact}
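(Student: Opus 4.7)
\medskip

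\noindent\textbf{Proof plan.} The plan is to prove the stronger statement that every non-chorded graph $H$ of order at least one contains a vertex of degree at most $2$. From this, the desired ordering is obtained by repeatedly peeling off such a vertex and placing it at the current last position, and the tripartite conclusion then follows from greedy $3$-colouring along that ordering.

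\medskip

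\noindent\textbf{Key step: a low-degree vertex always exists.} I would argue via the block-cut tree of $H$. If $H$ has no cycle, then $H$ is a forest and any leaf (or isolated vertex) has degree at most $1$. Otherwise $H$ contains a cycle, so its block-cut tree has a leaf block $B$ that shares at most one cut vertex with the rest of $H$. Split into three subcases according to $|B|$: if $|B|=2$, then $B$ is an edge and its non-cut endpoint is a leaf of $H$, having degree $1$; if $|B|=3$, then $B$ is a triangle, each of whose three vertices has degree $2$ in $B$, and the (at least two) non-cut vertices of $B$ therefore have degree exactly $2$ in $H$; if $|B|\ge 4$, then Proposition \ref{n/3stems} applied to the non-chorded biconnected graph $B$ produces at least $(|B|-2)/3+2\ge 3$ stem vertices in $B$, so at least two of these are non-cut vertices of $H$ and have degree $2$ in $H$.

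\medskip

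\noindent\textbf{Building the ordering and tripartiteness.} I would proceed by reverse induction on $n$. Using the step above, choose a vertex $v_n$ of $G$ with $\deg_G(v_n)\le 2$, and observe that $G-v_n$ is still non-chorded, since any chorded cycle in $G-v_n$ would already be a chorded cycle in $G$. Apply the inductive hypothesis to $G-v_n$ to obtain an ordering $v_1,\ldots,v_{n-1}$ and append $v_n$. Then each $v_i$ has at most two neighbours in $\{v_1,\ldots,v_{i-1}\}$, because that set is precisely the neighbourhood of $v_i$ in the subgraph $\langle\{v_1,\ldots,v_i\}\rangle$ at the moment $v_i$ was selected. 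For the tripartite conclusion, process $v_1,v_2,\ldots,v_n$ in forward order and assign each $v_i$ a colour in $\{1,2,3\}$ avoiding the colours of its at most two already-coloured preceding neighbours; such a colour always exists, so the resulting colour classes partition $V(G)$ into three independent sets.

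\medskip

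\noindent\textbf{Main obstacle.} The substantive issue is the leaf-block analysis: one must guarantee that the low-degree witness can be chosen \emph{non-cut}, so that its degree in the block equals its degree in $H$. This is precisely what the three-case split $|B|\in\{2,3,\ge 4\}$ addresses, and it is why the constant ``$+2$'' in the stem count of Proposition \ref{n/3stems} is convenient---it supplies the extra non-cut stem beyond the at most one cut vertex of $H$ sitting in $B$.
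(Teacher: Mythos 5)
Your proof is correct. Note that the paper states Fact~\ref{order} without proof (it is listed among ``easy-to-prove facts'' at the end), so there is no proof of record to compare against; judged on its own, your argument is sound. The heart of the matter is exactly what you identify: every non-chorded graph is $2$-degenerate, and your leaf-block analysis correctly handles the one delicate point, namely that the degree-$\le 2$ witness must be chosen among the non-cut vertices of the leaf block so that its degree in the block equals its degree in $H$; the ``$+2$'' in Proposition~\ref{n/3stems} (together with the fact that a leaf block contains at most one cut vertex) does supply this. The peeling argument and the greedy $3$-colouring along the resulting ordering are standard and correct. One remark: the degeneracy step admits a more elementary proof that avoids the block decomposition entirely --- if $\delta(H)\ge 3$, take a longest path $P$ ending at $v$; all neighbours of $v$ lie on $P$, and if $u$ is the neighbour farthest from $v$ along $P$, then the cycle formed by $P[v,u]$ and the edge $uv$ contains the edges from $v$ to its other two neighbours as chords. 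This gives the same conclusion without invoking Propositions~\ref{decomp}--\ref{n/3stems}, but your route is equally valid and has the side benefit of reusing machinery already developed in Section~2.
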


%%% Fact 2 %%%
\begin{fact} 
If $G$ is a graph of order $n$ containing no chorded cycles, then $|E(G)|\le 2n-4$. 
\label{2n-4}
\end{fact}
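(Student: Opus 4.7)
The plan is to prove Fact~\ref{2n-4} by induction on $n$, leveraging the structural results of Section~2. Note that the stated bound tacitly requires $n \geq 4$, since a single triangle already has $3 > 2 \cdot 3 - 4$ edges; below I argue for $n \geq 4$.

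The key auxiliary claim is that every non-chorded graph with at least one edge contains a vertex of degree at most 2. To prove it, take a connected component of $G$ that contains an edge and select a biconnected component $B$ inside it that contains at most one cut-vertex of $G$ (such a $B$ exists since the block-cut tree of the component is a tree and hence has a leaf). Then split into cases: if $B$ is a single bridge edge, the non-cut endpoint of $B$ has degree 1 in $G$; if $B$ is a triangle, at least two of its three vertices are non-cut-vertices of $G$ and each has degree 2; and if $B$ is a biconnected non-chorded block with $|V(B)| \geq 4$, then Proposition~\ref{n/3stems} supplies at least $(|V(B)| - 2)/3 + 2 \geq 2$ stem vertices in $B$, at least one of which avoids the unique cut-vertex of $G$ in $B$ and so has degree 2 in $G$.

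The base case $n = 4$ is a direct check: the only 4-vertex graphs with 4 edges (up to isomorphism) are $C_4$ and the paw, both non-chorded, while any graph on 4 vertices with 5 or more edges contains $K_4 - e$ and hence a chorded 4-cycle. So $|E(G)| \leq 4 = 2n - 4$. For the inductive step, let $n \geq 5$ and assume the bound for all non-chorded graphs on strictly fewer (and at least 4) vertices. If $G$ is edgeless the bound is immediate; otherwise the auxiliary claim produces a vertex $v$ with $\deg_G(v) \leq 2$. Then $G - v$ is still non-chorded on $n - 1 \geq 4$ vertices, so by induction $|E(G - v)| \leq 2(n-1) - 4 = 2n - 6$, and hence $|E(G)| = |E(G - v)| + \deg_G(v) \leq 2n - 4$.

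The main technical point is the auxiliary claim, specifically the biconnected block case, but this reduces immediately to Proposition~\ref{n/3stems} since we only need at least two stems in $B$. The remainder of the argument is a routine induction; the one subtlety is that a non-cut stem of $B$ inherits its full block-degree in $G$, which is exactly why deleting it removes at most two edges and lets the induction close with slack two.
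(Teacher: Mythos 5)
Your proof is correct. The paper states Fact~\ref{2n-4} without proof (it is offered as an ``easy-to-prove'' remark), so there is no official argument to compare against; your route --- showing that every non-chorded graph with an edge has a vertex of degree at most~$2$ by passing to a leaf block of the block-cut tree and invoking Proposition~\ref{n/3stems} when that block has order at least~$4$, then inducting --- is sound and is essentially a sharpened version of the $2$-degeneracy ordering underlying Fact~\ref{order}. The sharpening matters: the ordering of Fact~\ref{order} only yields $|E(G)|\le 0+1+2(n-2)=2n-3$, and it is your explicit base case at $n=4$ (where the maximum is $4$, attained by $C_4$ and the paw, since any $4$-vertex graph with $5$ edges contains $K_4-e$) that recovers the extra~$-1$. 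Your observation that the statement implicitly requires $n\ge 4$ is also a legitimate correction: a triangle is non-chorded with $3>2\cdot 3-4$ edges, so the bound as literally stated fails for $n\le 3$. The one point worth stating more explicitly is that a vertex of a leaf block other than its unique cut-vertex has all of its $G$-neighbors inside that block (any vertex lying in two blocks is a cut-vertex), which is exactly why its block-degree equals its $G$-degree; you use this in all three cases and gesture at it in your closing remark, but it is the hinge of the auxiliary claim.
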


\vspace{0.35cm}

\noindent
{\bf Acknowledgments.} 
The authors would like to thank Ariel Keller Rorabaugh for the helpful insights she provided.
The second author is supported by the Heilbrun Distinguished Emeritus 
Fellowship from Emory University Emeritus College. 
The third author is supported by JSPS KAKENHI Grant Number JP19K03610.


\begin{thebibliography}{99} 
\bibitem{CFGL}
S. Chiba, S. Fujita, Y. Gao, G. Li, On a sharp degree sum condition 
for disjoint chorded cycles in graphs, 
Graphs Combin. 26, Issue 2 (2010) 173--186. 

\bibitem{CH} K. Corr\'adi, A. Hajnal, 
On the maximal number of independent circuits in a graph,
Acta Math. Acad. Sci. Hungar. 14, Issue 3-4 (1963) 423--439. 

\bibitem{E} H. Enomoto,  
On the existence of disjoint cycles in a graph, 
Combinatorica 18, Issue 4 (1998) 487--492. 

\bibitem{F} D. Finkel, 
On the number of independent chorded cycles in a graph, 
Discrete Math. 308, Issue 22 (2008) 5265--5268.

\bibitem{FMTY} S. Fujita, H. Matsumura, M. Tsugaki, T. Yamashita, 
Degree sum conditions and vertex-disjoint cycles in a graph, 
Australas. J. Combin. 35 (2006) 237--251.

\bibitem{G} R.J. Gould,  
Graph Theory, Dover Pub. Inc. Mineola, N.Y. 2012.  

\bibitem{GHK} R.J. Gould, K. Hirohata, A. Keller,  
On vertex-disjoint cycles and degree sum conditions, 
Discrete Math. 341, Issue 1 (2018) 203--212.

\bibitem{GHK2} R.J. Gould, K. Hirohata, A. Keller,  
On independent triples and vertex-disjoint chorded cycles in graphs, submitted. 

\bibitem{sigma4}   R.J. Gould, K. Hirohata, A. Keller,  
On vertex-disjoint chorded cycles and degree sum conditions, submitted. 

\bibitem{HarPri} F. Harary, G. Prins, The block-cutpoint-tree of a graph, 
{\it Publ. Math. Debrecen.} 13 (1966) 103--107.

\bibitem{MaYan}  F. Ma, J. Yan, The confirmation of a conjecture on disjoint cycles in a graph, 
{\it Discrete Math.} 341, Issue 10 (2018) 2903--2911.

\bibitem{W} H. Wang, 
On the maximum number of independent cycles in a graph, 
Discrete Math. 205 Issues 1--3 (1999) 183--190. 
\end{thebibliography}
\end{document}